\newtheorem{theorem}{Theorem}[section]
\newtheorem{lemma}[theorem]{Lemma}
\theoremstyle{definition}
\newtheorem{definition}[theorem]{Definition}
\newtheorem{corollary}[theorem]{Corollary}
\newtheorem{proposition}[theorem]{Proposition}
\theoremstyle{remark}
\newtheorem{claim}[theorem]{{\bf Claim}}
\numberwithin{equation}{section}
\begin{document}

\def\C{\mathbb C}
\def\R{\mathbb R}
\def\X{\mathbb X}
\def\Z{\mathbb Z}
\def\Y{\mathbb Y}
\def\Z{\mathbb Z}
\def\N{\mathbb N}
\def\cal{\mathcal}
\def\cD{\cal D}
\def\tD{\tilde{{\cal D}}}
\def\F{\cal F}
\def\tf{\tilde{f}}
\def\tg{\tilde{g}}
\def\tu{\tilde{u}}

\def\cal{\mathcal}
\def\b{\mathcal B}
\def\c{\mathcal C}
\def\cc{\mathbb C}
\def\x{\mathbb X}
\def\r{\mathbb R} 
\def\T{\mathbb T}
\def\uu{(U(t,s))_{t\ge s}}
\def\vv{(V(t,s))_{t\ge s}}
\def\xx{(X(t,s))_{t\ge s}}
\def\yy{(Y(t,s))_{t\ge s}}
\def\zz{(Z(t,s))_{t\ge s}}
\def\ss{(S(t))_{t\ge 0}}
\def\tt{(T(t,s))_{t\ge s}}
\def\rr{(R(t))_{t\ge 0}}
\title[Diffusive Delay in SIR Models]{\textbf{Traveling Waves for a Diffusive SIR Epidemic model with Delay in the Diffusion Term  }}

\author{William Kyle Barker}
\address{Department of Mathematics, University of Arkansas at Little Rock, 
Little Rock, AR 72204}
\email{wkbarker@ualr.edu}

\thanks{}

\date{\today}
\subjclass[2000]{Primary: 35C07 ; Secondary: 35K57 }
\keywords{Traveling waves; Reaction-diffusion equations; Delay; SIR Model}
\begin{abstract}
This paper is concerned with traveling waves to an diffusive SIR model with delay placed in the diffusion terms as well as nonlinear incidence rate with delay. Using a cross iteration scheme and partial monotone conditions it will be shown that the existence of quasi upper and lower solutions is a sufficient condition for the existence of a traveling wave front. This will be shown via Schauder's fixed point theorem. Given an appropriate basic reproduction number the traveling wave front will flow from a disease-free steady state to endemic steady state. The construction of quasi upper and lower solutions will be carried out for a specific model.   
\end{abstract}

\maketitle
\section{Introduction}
\noindent Kermack and Mckendrick studied an ordinary differential system 
\begin{align}
&S'(t)=F(S,I)(t) \\
&I'(t)=G(S,I)(t)\nonumber\\
&R'(t)=H(I,R)(t)\nonumber.
\end{align}
Here $S(t)$ is the number of individuals who
are susceptible to the disease, $I(t)$  is the number of individuals either are infected or can spread the disease, and $R(t)$ represent the number of individuals who have
been infected and then removed from the possibility of being
infected again, \cite{Kermack}. 

\medskip 
\noindent In terms of disease and the nature of it spreading, it is required to study spatial diffusion. Most models incorporate a random walk process known as Fickian Diffusion, see \cite{fif, Fisher, Kawasaki, Kolmogorov, Martcheva, Murray1, Murray2, Okubo}. The spread of disease can be modeled by traveling waves. In fact, realistic SIR models often incorporate diffusion terms and time delay in order to be more accurate. Due to the fact that there is interaction between all three populations, so it makes sense to study the dynamic structure based upon immediate past migration factors. This means addition of small delay in each diffusion term is an interesting topic to study, see \cite{Cosner, Levin}.    

\medskip 
\noindent
Cooke, \cite{Cooke} studied the relationship of the force of infection was given by the incident rate $\beta S(t)I(t-\tau),$ where $\tau$ is some positive delay and $\beta$ is a positive infection constant. 

\medskip 
\noindent 
Solutions of functional parabolic equations were first established by Schaaf, \cite{sch} using phase space
analysis and the maximum principle. Traveling wave solutions of reaction diffusion equations with delay in the reaction term, were first studied using the monotone iteration method via upper and lower solutions by Wu and Zou, \cite{wuzou}. The results were extended by Ma, \cite{ma} by using the idea of super and sub solutions and Schauder's fixed point theorem. The construction of upper and lower solutions was relaxed by Boumenir, and Nguyen, \cite{boumin}  by introducing the concept of quasi upper and lower solutions. It was also shown that the monotone iteration method adds $C^2$ smoothness the quasi upper and lower solutions, which become upper and lower solutions, respectively.

\medskip 
\noindent 
When interactive terms are introduced into models such as Lotka-Volterra or SIR models, the monotone conditions used in in Wu and Zou, \cite{wuzou} fail. Thus, the monotone iteration method, often referred as a cross or mixed
iteration has to be introduced. The definition of such an iteration is dependent upon the iterations of the groups being studied, see \cite{GanXuYang, Huazou2, lilin,lvwang,pan,Feng,Ruan,Wangzhou,WangWu,YangLiang} and their references therein. 

\medskip 
\noindent 
Traveling waves of Diffusive SIR models with certain  incident rates have been studied in \cite{Ai, Bai, Fu, GanXuYang,LiLi, Shu, WengZhao,YangLiang}. Yang, Liang, Zhang \cite{YangLiang} studied the existence of traveling waves for nonlinear terms that satisfied a certain partial quasimonotone condition using a cross iteration method for the following system  
\begin{align}
&\frac{\partial S(x,t)}{\partial t}=D_S\frac{\partial^2 S(x,t)}{\partial x^2}+B-\mu_1S(x,t)-\frac{\beta S(x,t)I(x, t-\tau)}{1+\alpha I(x, t-\tau) }, \\
&\frac{\partial I(x,t)}{\partial t}=D_I\frac{\partial^2 I(x,t)}{\partial x^2}+\frac{\beta S(x,t)I(x, t-\tau)}{1+\alpha I(x, t-\tau) }-(\mu_2+\gamma)I(x,t) \nonumber\\
&\frac{\partial R(x,t)}{\partial t}=D_R\frac{\partial^2 R(x,t)}{\partial x^2}+\gamma I(x,t) -\mu_3 R(x,t).\nonumber
\end{align}
with initial conditions, bounded domain and smooth boundary. 

\medskip
\noindent This paper is concerned with a similar model as found in Yang, Liang, Zhang \cite{YangLiang}. This project will study the dynamics when small delays are placed in each of the diffusion terms as well as a small delay in the saturation term. 

\medskip
\noindent
 The constants $D_S, D_I, D_R,  B, \mu_1, \mu_2, \mu_3,  \gamma, \alpha, \beta$ are positive. The values $D_S, D_I, D_R$ are the diffusion constants, $ \mu_1, \mu_2, \mu_3$ are the respective death rates for the populations, and $\gamma$ is the recovery rate of the infected population. The value $B$ is the natural birth rate, $\beta$ is the average number of contacts for infected individuals per day and $\tau>0$ is a latency value where infected individuals can spread the disease after that period. 

\medskip
\noindent
The exact question that we aim to answer is on the existence of traveling waves of the following system:
\begin{align}\label{SIR1}
&\frac{\partial S(x,t)}{\partial t}=D_S\frac{\partial^2 S(x,t-\tau_1)}{\partial x^2}+B-\mu_1S(x,t)-\frac{\beta S(x,t)I(x, t-\tau_4)}{1+\alpha I(x, t-\tau_4) }, \\
&\frac{\partial I(x,t)}{\partial t}=D_I\frac{\partial^2 I(x,t-\tau_2)}{\partial x^2}+\frac{\beta S(x,t)I(x, t-\tau_4)}{1+\alpha I(x, t-\tau_4) }-(\mu_2+\gamma)I(x,t) \nonumber\\
&\frac{\partial R(x,t)}{\partial t}=D_R\frac{\partial^2 R(x,t-\tau_3)}{\partial x^2}+\gamma I(x,t) -\mu_3 R(x,t).\nonumber
\end{align}

The values $\tau_1, \tau_2, \tau_4$ are migration delays and $\tau_3$ is the latency period of the disease. We assume that $\tau_1, \tau_2, \tau_3>\tau_3.$
\medskip
\noindent
The basic reproduction number $\mathcal{R}_0={B\beta}/{\mu_1(\mu_2+\gamma)}$ determines if the system has a positive steady endemic state. This happens when $\mathcal{R}_0>1.$ 

\medskip
\noindent
The existence of traveling waves for reaction diffusion equations with delay in the diffusion term using quasi upper and lower solutions via the monotone iteration method in Barker and Nguyen, \cite{BarkNguy}. Furthermore, Barker, \cite{Barker} was able to show the existence of traveling wave  for reaction diffusion systems with delay in the diffusion term using quasi upper and lower solutions via the cross iteration method.

\medskip
\noindent
The paper will be organized as follows: in Section 2 we will put forth preliminary information that will be utilized throughout the paper. In Section 3, we will show the existence of traveling waves for the system (\ref{SIR1}) via Schauder's fixed point theorem in a Banach space equipped with exponential decay norm. This will be aided by partial monotone conditions that will be placed on appropriate upper and lower solutions. 

\medskip
\noindent
It will also be shown that the  existence of super and sub solutions imply  existence of quasi upper and lower solutions, which in turn imply the existence of smooth upper and lower solutions. This will also imply the existence of quasi upper and lower solutions is a sufficient condition for the existence of traveling waves.  In Section 4, we will construct explicit super and sub solutions to an interesting SIR model with incidence rate and delayed diffusion terms. 
\section{Preliminaries}
In this paper we will use some standard notations as $\R^n,\C^n$ for the fields of reals and complex numbers in $n$ dimensions. We also take standard ordering for $\R^3$. This means $u=(u_1,u_2, u_3)^T$ and $v=(v_1,v_2,v_3)^T.$ We say that $u\le v$ if $u_i\le v_i, \ i=1,2,3$ We also say that $u<v$ if $u_i< v_i, \ i=1,2,3$ We also take $|\cdot|$ to be the Euclidean norm in $\R^3.$ 

\medskip
\noindent
The space of all bounded and continuous functions from $\R  \to \R^n$ is denoted by $BC(\R,\R^n)$ which is equipped with the sup-norm $|| f|| := \sup_{t\in\R} \| f(t)\|,$ where $f(t)\in C\left(U, \R^3\right)$, where $U\subset \R$. $BC^k(\R,\R^n)$ stands for the space of all $k$-time continuously differentiable functions $\R\to\R^n$ such that all derivatives up to order $k$ are bounded.

\medskip
\noindent
If the boundedness is dropped from the above function spaces we will simply denote them by $C(\R,\R^n)$ and $C^k(\R,\R^n)$. For $f,g\in BC(\R,\R^n)$ we will use the natural order $f\le g$ if and only if $f(t)\le g(t)$ for all $t\in \R$. A constant function $f(t)=\zeta$ for all $t\in \R$ will be denoted by $\hat \zeta.$ 

\medskip
\noindent
Motivated by the general reaction diffusion system 
\begin{align}\label{RD1}
&\frac{\partial u(x,t)}{\partial t}=D_1\frac{\partial^2 u(x,t-\tau_1)}{\partial x^2} +f_1(u_t(x),v_t(x),w_t(x)),\\ 
&\frac{\partial v(x,t)}{\partial t}=D_2\frac{\partial^2 v(x,t-\tau_2)}{\partial x^2} +f_2(u_t(x),v_t(x),w_t(x))
,\\ 
&\frac{\partial w(x,t)}{\partial t}=D_2\frac{\partial^2 w(x,t-\tau_3)}{\partial x^2} +f_2(u_t(x),v_t(x),w_t(x))
\end{align}
where $t\in\R,  x, u(x,t), v(x,t), w(x,t)\in \R, \ D_i>0, i=1,2,3, \ f_i:C\left([-\tau,0], \R\right)\to \R$ is Lipschitz continuous and $u_t(x) v_t(x), w_t(x)\in C\left([-\tau,0], \R\right),$ defined as 
\[u_t(x)=u(x, t+\theta), v_t(x)=v(x, t+\theta), w_t(x)=w(x, t+\theta) \ \theta\in [-\tau,0], \ t\ge 0, \ x\in \R, \ \tau=\max\{\tau_1, \tau_2, \tau_3\}.\]

\medskip
\noindent
Moreover, we note that $f_{ic}(\phi_{1_{\theta}},\phi_{2_{\theta}}): \X_{c\tau} = C\left([-\tau,0], \R^3\right)\to \R $ is defined by 
\[f_{ic}(\phi,\varphi)= f_i\left(\phi^{1c},\phi^{2c},\phi^{3c}\right), \ \phi^{c_i}(\theta)=\phi_i(c\theta), \ \theta \in [-\tau,0], \ i=1,2,3.\]
We are interested the corresponding general wave system of mixed type
\begin{align}\label{SIR3}
&D_1\phi''(t)-c\phi'(t+r_1)-\beta_1\phi(t+r_1) +f_{1c}(\phi_t,\varphi_t, \psi_t)=0  \\
&D_2\varphi''(t)-c\varphi'(t+r_2)-\beta_2\varphi(t+r_2)+f_{2c}(\phi_t,\varphi_t, \psi_t)=0 \nonumber\\
&D_3\psi''(t)-c\psi'(t+r_3)-\beta_3\psi(t+r_3)+f_{3c}(\phi_t,\varphi_t, \psi_t)=0 \nonumber
\end{align}
\noindent We also assume $\Phi=(\phi_1,\varphi_1, \psi_1), \Psi=(\phi_2,\varphi_2, \psi_2)$ and the following conditions for the nonlinear parts of the system as 
\begin{enumerate}[label=(C\arabic*)]
\item $f(\hat{0})=f(\hat{K})=0,$ where $K=(k_1,k_2, k_3)$ 
\item There are three positive Lipchitz constants $L_1, \ L_2, L_3$ such that 
\begin{align*}
 &\left|f_1(\Phi)-f_1(\Psi)\right|\le L_1 \left|\left|\Phi-\Psi\right|\right|  \\
&\left|f_2(\Phi)-f_2(\Psi)\right|\le L_2 \left|\left|\Phi-\Psi\right|\right|\\
& \left|f_3(\Phi)-f_3(\Psi)\right|\le L_3\left|\left|\Phi-\Psi\right|\right|
\end{align*}
\end{enumerate}
for $\Phi , \Psi \in C\left([-\tau,0], \R\right)),$ and $0\le \phi_i\le M_1, 0\le \varphi_i\le M_2, 0\le \psi_i\le M_3 $ for $i=1,2, $ where \[M_1=\sup_{t\in \R}\phi (t)>k_1, \quad M_2=\sup_{t\in \R}\varphi (t)>k_2,\quad M_3=\sup_{t\in \R}\psi (t)>k_3.\]

\section{Main Results}
\noindent In this section our main results will be discussed. We will start with a brief description a partial quasi monotone condition when there is cross interaction between members of the susceptible, infected and removed populations.  

\subsection{Existence via Schauder's Theorem} 
\noindent We will extend results found in \cite{Barker}, where existence of traveling waves was established for a three vector system using Schauder's fixed point theorem. We will also employ a certain partial quasi monotone condition found in Yang, Liang, Zhang \cite{YangLiang}.    We will first define upper/lower and quasi upper/lower solutions and super/sub solutions. 

\begin{definition} A  pair of functions  \
 $\overline{\Phi}=\left(\overline{\phi}, \overline{\varphi}, \overline{\psi}  \right), \underline{\Phi}=\left(\underline{\phi}, 
  \underline{\varphi}, \underline{\psi}\right)  \in C^2(\R,\R^3),$ where $ \phi, \phi',\phi'',  \varphi, \varphi',\varphi'', \psi, \psi',\psi''$ are bounded on $\R$, is called an  upper solution (lower solution, respectively) for the wave equation (\ref{RD1}) if it satisfies the following
\begin{align*}
&D_1\overline{\phi}''(t)-c\overline{\phi}'(t+r_1)+f_{1c}(\overline{\phi_t}, \overline{\varphi_t}, \overline{\psi_t} )\le 0 , \\
& D_2\overline{\varphi}''(t)-c\overline{\varphi}'(t+r_3)+f_{2c}(\underline{\phi_t}, \overline{\varphi_t}, \underline{\psi_t})\le 0,\\
&D_3\overline{\psi}''(t)-c\overline{\psi}'(t+r_1)+f_{3c}(\overline{\phi_t}, \overline{\varphi_t}, \overline{\psi_t} )\le 0
\end{align*}
and
\begin{align*}
&D_1\underline{\phi}''(t)-c\underline{\phi}'(t+r_1)+f_{1c}(\underline{\phi_t}, \underline{\varphi_t}, \underline{\psi_t} )\ge 0 , \\
& D_2\underline{\varphi}''(t)-c\underline{\varphi}'(t+r_3)+f_{2c}(\overline{\phi_t}, \underline{\varphi_t}, \overline{\psi_t})\le 0,\\
&D_3\underline{\psi}''(t)-c\underline{\psi}'(t+r_1)+f_{3c}(\underline{\phi_t}, \underline{\varphi_t}, \underline{\psi_t} )\le 0
\end{align*}
\end{definition}

\begin{definition} $\overline{\Phi}=\left(\overline{\phi}, \overline{\varphi}, \overline{\psi}  \right), \underline{\Phi}=\left(\underline{\phi}, 
  \underline{\varphi}, \underline{\psi}\right)  \in C^1(\R,\R^3),$ where $ \phi, \phi', \varphi, \varphi',  \psi, \psi'$  are bounded on $\R$, $\phi'' , \varphi'', \psi''$ are locally integrable and essentially bounded on $\R$, is called a quasi-upper solution (quasi-lower solution, respectively) for the wave equation (\ref{RD1}) if it satisfies the following for almost every $t\in \R$
\begin{align*}
&D_1\overline{\phi}''(t)-c\overline{\phi}'(t+r_1)+f_{1c}(\overline{\phi_t}, \overline{\varphi_t}, \overline{\psi_t} )\le 0 , \\
& D_2\overline{\varphi}''(t)-c\overline{\varphi}'(t+r_3)+f_{2c}(\underline{\phi_t}, \overline{\varphi_t}, \underline{\psi_t})\le 0,\\
&D_3\overline{\psi}''(t)-c\overline{\psi}'(t+r_1)+f_{3c}(\overline{\phi_t}, \overline{\varphi_t}, \overline{\psi_t} )\le 0
\end{align*}
and
\begin{align*}
&D_1\underline{\phi}''(t)-c\underline{\phi}'(t+r_1)+f_{1c}(\underline{\phi_t}, \underline{\varphi_t}, \underline{\psi_t} )\ge 0 , \\
& D_2\underline{\varphi}''(t)-c\underline{\varphi}'(t+r_3)+f_{2c}(\overline{\phi_t}, \underline{\varphi_t}, \overline{\psi_t})\le 0,\\
&D_3\underline{\psi}''(t)-c\underline{\psi}'(t+r_1)+f_{3c}(\underline{\phi_t}, \underline{\varphi_t}, \underline{\psi_t} )\le 0
\end{align*}
\end{definition}

\begin{definition} $\overline{\Phi}=\left(\overline{\phi}, \overline{\varphi}, \overline{\psi}  \right), \underline{\Phi}=\left(\underline{\phi}, 
  \underline{\varphi}, \underline{\psi}\right)  \in C(\R,\R^3),$ where $ \phi, \varphi,  \psi$  are bounded on $\R$,  $\phi' , \varphi', \psi' \phi'' , \varphi'', \psi''$ exist almost everywhere, locally integrable and essentially bounded on $\R$, is called a super solution (sub solution, respectively) for the wave equation (\ref{RD1}) if it satisfies the following for almost every $t\in \R$
\begin{align*}
&D_1\overline{\phi}''(t)-c\overline{\phi}'(t+r_1)+f_{1c}(\overline{\phi_t}, \overline{\varphi_t}, \overline{\psi_t} )\le 0 , \\
& D_2\overline{\varphi}''(t)-c\overline{\varphi}'(t+r_3)+f_{2c}(\underline{\phi_t}, \overline{\varphi_t}, \underline{\psi_t})\le 0,\\
&D_3\overline{\psi}''(t)-c\overline{\psi}'(t+r_1)+f_{3c}(\overline{\phi_t}, \overline{\varphi_t}, \overline{\psi_t} )\le 0
\end{align*}
and
\begin{align*}
&D_1\underline{\phi}''(t)-c\underline{\phi}'(t+r_1)+f_{1c}(\underline{\phi_t}, \underline{\varphi_t}, \underline{\psi_t} )\ge 0 , \\
& D_2\underline{\varphi}''(t)-c\underline{\varphi}'(t+r_3)+f_{2c}(\overline{\phi_t}, \underline{\varphi_t}, \overline{\psi_t})\le 0,\\
&D_3\underline{\psi}''(t)-c\underline{\psi}'(t+r_1)+f_{3c}(\underline{\phi_t}, \underline{\varphi_t}, \underline{\psi_t} )\le 0
\end{align*}
\end{definition}

\medskip 
\noindent In the interest of showing the existence of wave front solutions for the system, we fix some constants, $\beta_1,\beta_2, \beta_3>0$ large enough that the following hold. 

\medskip
\noindent 
\begin{definition}{Partial Quasi-Monotone Condition (PQM)}
Fix two constants $\beta_1, \ \beta_2,\beta_3>0$ such that 
\begin{enumerate}[label=(P\arabic*)]
\item $f_{1c}\left(\phi_1,\varphi_1,\psi_1\right)-f_{1c}\left(\phi_2,\varphi_2,\psi_2\right)+\beta_1\left[\phi_1(0)-\phi_2(0)\right]\ge 0,$
\item $f_{2c}\left(\phi_1,\varphi_1,\psi_1\right)-f_{2c}\left(\phi_1,\varphi_2,\psi_1\right)+\beta_2\left[\varphi_1(0)-\varphi_2(0)\right]\ge 0.$
\item $f_{2c}\left(\phi_1,\varphi_1,\psi_1\right)-f_{2c}\left(\phi_2,\varphi_1,\psi_1\right)\le 0,$
\item $f_{2c}\left(\phi_1,\varphi_1,\psi_1\right)-f_{2c}\left(\phi_1,\varphi_1,\psi_2\right)\le 0,$
\item $f_{3c}\left(\phi_1,\varphi_1,\psi_1\right)-f_{3c}\left(\phi_1,\varphi_2,\psi_1\right)+\beta_3\left[\varphi_1(0)-\varphi_2(0)\right]\ge 0.$
\end{enumerate}
where $\phi_1,\ \phi_2, \ \varphi_1,\ \varphi_2, \psi_1,\ \psi_2  \in C([-\tau,0],\R)$,\[ 0\le \phi_2\le \phi_1\le M_1,\ 0\le \varphi_2\le \varphi_1\le M_2, \ 0\le \psi_2\le \psi_1\le M_3.\]
\end{definition}

\medskip
\noindent 
Using Theorem 4.1 in Mallet-Peret, \cite{mal} there is a unique bounded solution for
\begin{equation}\label{fde1}
x''(t)-ax'(t+r)-bx(t+r)=f(t)
\end{equation}
where $a\not=0, b>0$ and $r\in\R$ is assumed to be a small. The characteristic equation of the aforementioned equation is 
\begin{equation}\label{c}
az^2-az e^{rz}-be^{rz}=0.
\end{equation}
 whenever the function $f(t)$ is positive, where $a>0,b>0, r\ne 0$ if the characteristic equation has no zeros on the imaginary axis.
 \begin{proposition}\label{pro 1}
For given $a\not=0,b>0$, and $\lambda_1, \lambda_2$ are the positive and negative roots to characteristic equation for the non-delay differential equation, respectively. The following assertions are true:
\begin{enumerate}
\item For every $r\in\R$, Eq.(\ref{c}) has no root on the imaginary axis;
\item For every $r \le 0$ ($r\ge 0$, respectively), Eq. (\ref{c}) has only one single root in the right half of the complex plane (Eq. (\ref{c}) has only one single root in the left half of the complex plane, respectively), and the root has continuous dependence on $r$.
\item For sufficiently small $r\le 0$ ($r\ge 0$, respectively) there exists only a single root in the strip $\{ z\in\C | \  2\lambda_2 \le \Re z \le 0\}$  (there exists only a single root in the strip $\{ z\in\C | \   0 \le \Re z \le 2\lambda_1\}$, respectively), and the root has continuous dependence on $r$.
\end{enumerate}
\end{proposition}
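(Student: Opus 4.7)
The plan is to prove the three assertions by analyzing the characteristic function $h(z,r) := az^2 - aze^{rz} - be^{rz}$ through a combination of direct substitution, the implicit function theorem, and a Rouché-type argument along the lines of Mallet-Paret's Theorem 4.1 already cited.

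For (1), I would assume toward contradiction that $z = i\omega$ with $\omega \in \R$ solves $h(i\omega, r) = 0$. Substituting and separating real and imaginary parts yields the system
\begin{align*}
-a\omega^2 - b\cos(r\omega) + a\omega\sin(r\omega) &= 0,\\
-a\omega\cos(r\omega) - b\sin(r\omega) &= 0.
\end{align*}
Treating this as a linear system in $(\cos(r\omega),\sin(r\omega))$ with determinant $b^2 + a^2\omega^2 > 0$, one can solve explicitly and then impose $\cos^2(r\omega) + \sin^2(r\omega) = 1$ to obtain an algebraic constraint on $\omega$. Combining this constraint with the first equation forces a sign contradiction coming from $b>0$ and $a\ne 0$; this rules out any purely imaginary root for every $r\in\R$.

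For (2), I would use a continuity/homotopy argument. At $r=0$ the equation collapses to $az^2 - az - b = 0$, with roots $\lambda_1 > 0 > \lambda_2$ since $b>0$. Because by (1) no root can cross the imaginary axis as $r$ is deformed, the count of roots in each open half-plane is locally constant in $r$; one then controls the count globally by Rouché on a large semicircle. For $r\le 0$ and $\Re z\ge 0$ we have $|e^{rz}|\le 1$, so $|az^2|$ dominates $|aze^{rz}+be^{rz}|$ on a large right-half circle, showing $h(\cdot,r)$ has the same number of right-half-plane zeros as $az^2$, corrected by the asymptotic analysis (which yields exactly one simple zero); the symmetric argument handles $r\ge 0$ in the left half-plane. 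Continuous dependence of that simple root on $r$ then follows from the implicit function theorem applied at that simple zero.

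For (3), I would apply the implicit function theorem at $(r,z) = (0,\lambda_1)$ (respectively $(0,\lambda_2)$): since $\partial h/\partial z(\lambda_1, 0) = 2a\lambda_1 - a - br = 2a\lambda_1 - a \ne 0$ (as $\lambda_1$ is a simple root of the non-delay quadratic), there is a smooth branch $z(r)$ with $z(0) = \lambda_1$ remaining in the strip $\{0\le\Re z\le 2\lambda_1\}$ for $|r|$ small. Uniqueness of this root inside the strip is obtained by yet another Rouché argument on a rectangle $\{0\le\Re z\le 2\lambda_1,\ |\Im z|\le T\}$ with $T$ chosen large enough that $|az^2|$ dominates $(|az|+b)|e^{rz}|$ on the top and bottom edges; for $r$ small this remains true and the number of zeros in the strip equals that of the unperturbed polynomial, namely one.

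The main obstacle will be the Rouché estimates in (2) and (3), since the regions are unbounded in the imaginary direction and $|e^{rz}|$ need not decay there. This is precisely the difficulty addressed by Mallet-Paret's Theorem 4.1, so the cleanest way to close out the Rouché step is to quote that theorem to guarantee absence of roots with large imaginary part in the relevant half-plane/strip, after which the argument principle applied to a large bounded region delivers the exact root counts claimed.
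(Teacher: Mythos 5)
The paper itself gives no proof of this proposition beyond the remark that it follows from Rouch\'e's theorem and the implicit function theorem as in Barker--Nguyen \cite{BarkNguy}, so your overall strategy (IFT at the simple roots $\lambda_1,\lambda_2$ of the undelayed quadratic, plus a Rouch\'e/homotopy root count) is the intended one. However, your argument for part (1) contains a step that fails, and since (1) underpins the homotopy in (2), the gap propagates.

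Carry your own computation for (1) to the end: the linear system has the unique solution $\cos(r\omega)=-ab\omega^2/(b^2+a^2\omega^2)$, $\sin(r\omega)=a^2\omega^3/(b^2+a^2\omega^2)$, and the identity $\cos^2+\sin^2=1$ reduces to $a^2\omega^4=a^2\omega^2+b^2$. This equation \emph{does} have nonzero real solutions $\omega=\pm\omega_0$ with $\omega_0^2=\tfrac12\bigl(1+\sqrt{1+4b^2/a^2}\bigr)$, and for such $\omega_0$ the remaining phase condition $e^{ir\omega_0}=-a\omega_0^2/(b+ia\omega_0)$ is satisfied for a discrete, nonempty set of real $r$ of both signs (for $a=b=1$ one gets $\omega_0=\sqrt{(1+\sqrt5)/2}\approx 1.272$ and $r\approx 1.76$, which produces a genuine purely imaginary root). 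So there is no sign contradiction for arbitrary $r$; assertion (1) as literally stated fails for those $r$ and can only be established for $|r|$ small, which is the regime the paper actually works in ($r$ ``is assumed to be small''). The correct closing move is: the modulus equation shows $\pm\omega_0$ are the only candidate frequencies, the admissible $r$'s form a discrete set bounded away from $0$ (at $r=0$ the phase condition would force $a\omega_0=0$), hence there are no imaginary roots for $|r|$ below an explicit threshold. Relatedly, in (2) your Rouch\'e comparison of the characteristic function with $az^2$ on a large semicircle does not work as stated: $az^2$ has a double zero at the origin sitting on the contour, and the strict domination $|az^2|>|(az+b)e^{rz}|$ fails on the imaginary-axis segment $|\omega|\le\omega_0$. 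The workable route is the homotopy you mention: roots in the closed right half-plane are uniformly bounded when $r\le 0$ (there $|a||z|^2\le |a||z|+b$), none cross the axis for the admissible range of $r$, so the count equals the count at $r=0$, namely one; this, combined with the implicit function theorem exactly as in your part (3), is what the cited reference carries out.
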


 \medskip \noindent
The proof relies on Rouche's Theorem and the Implicit Function Theorem. In particular, this was shown in Barker and Nguyen, \cite{BarkNguy}. The solution can be found via an isomorphism,  from $W^{1,p}$ onto $L^p$ using the convolution 
$$({\cal L}^{-1}f)(\xi) = (G*f)(\xi ) =\int^\infty_{-\infty} G(\xi -s)f(s)ds,
$$
where
$$
 W^{1,p}=\{ f\in L^p | \ f \ \mbox{is absolutely continuous, and}\ f'\in L^p\},$$
 and 
 $$ G(\xi) =\frac{1}{2\pi} \int^\infty_{-\infty} e^{i\xi\eta}\Delta (i\eta)^{-1} d\eta 
$$
with the estimate
$$
| G(\xi )| \le Ke^{-\alpha |\xi|}
$$
for some $K>0$ and $\alpha >0$.

\medskip 
\noindent
The importance of quasi-upper/lower solutions are actually needed in order to use the isomorphism from $W^{1,p}$ onto $L^p.$ This is due to the definition requiring that quasi-upper/lower solutions are $C^1$ on the real line and $C^2$ almost everywhere on the real line.

\medskip
\noindent Consider the space \[B_{\mu}(\R,\R^3)=\{\Phi=(\phi, \varphi,\varphi)\in C(\R,\R^3): \ |\Phi|_{\mu}<\infty \},\]
where \[|\Phi|_{\mu}=\sup_{t\in \R} \ e^{-\mu|t|}||\Phi(t)||, \ \mu>0\]
is the exponential decay norm. We are interested in solutions that appear in the closed, bounded, convex subset $\Gamma\subset B_{\mu}$ defined by 
\[
\Gamma=\left\{\left(\phi,\varphi\right)\in C\left(\R,\R^3\right)\middle\vert
\begin{array}{l}\underline{\phi}(t)\le \phi(t)\le \overline{\phi}(t),  \\
\underline{\varphi}(t)\le \varphi(t)\le \overline{\varphi}(t),\\
\underline{\psi}(t)\le \psi(t)\le \overline{\psi}(t)
 \end{array}
\right\}. \] 

\begin{lemma}\label{LVlem1} Define $\Gamma_2\left(\left(\underline{\phi}, \underline{\varphi}, \underline{\psi}\right),\left(\overline{\phi}, \overline{\varphi}, \overline{\psi} \right)\right), |\Phi|_{\mu},$ and $B_{\mu}(\R,\R^3)$ as above, then 
\begin{enumerate}
\item $ \Gamma$ is nonempty.
\item $\Gamma$ is closed, bounded and convex.
\item  $\Gamma\subset B_{\mu}(\R,\R^3).$
\item $\left(B_{\mu}(\R,\R^3),|\cdot|_{\mu}\right)$ is a Banach space.
\end{enumerate}   
\end{lemma}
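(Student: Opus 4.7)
The plan is to verify each of the four claims in turn, noting that (1)--(3) follow from elementary pointwise reasoning about the componentwise order while (4) is the only item requiring genuine analytic work. I will assume throughout that the quasi upper and lower solutions satisfy $\underline{\Phi}(t) \le \overline{\Phi}(t)$ componentwise on $\R$, which is the standing hypothesis in this framework and is required merely for $\Gamma$ to be well defined.

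For (1), I would observe that $\underline{\Phi} = (\underline{\phi}, \underline{\varphi}, \underline{\psi})$ itself lies in $\Gamma$, since it is continuous and trivially satisfies the sandwiching inequalities. For the convexity half of (2), if $\Phi_1, \Phi_2 \in \Gamma$ and $\lambda \in [0,1]$, the combination $\lambda \Phi_1 + (1-\lambda)\Phi_2$ lies componentwise between $\underline{\Phi}$ and $\overline{\Phi}$ because convex combinations preserve pointwise order. Boundedness follows because $M := \max\{\sup|\overline{\phi}|, \sup|\overline{\varphi}|, \sup|\overline{\psi}|\} < \infty$ by hypothesis on the upper solution, which gives $|\Phi|_\mu \le \sqrt{3}\,M$ uniformly on $\Gamma$ and simultaneously establishes (3). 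For closedness, if $\Phi_n \to \Phi$ in $|\cdot|_\mu$, then for each fixed $t$ the factor $e^{-\mu|t|}$ is a positive constant, so $\Phi_n(t) \to \Phi(t)$ in $\R^3$, and the componentwise inequalities $\underline{\Phi}(t) \le \Phi_n(t) \le \overline{\Phi}(t)$ pass to the limit.

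For (4), the verification that $|\cdot|_\mu$ is a norm is routine: positivity, homogeneity, and the triangle inequality all transfer from the Euclidean norm via the positive weight $e^{-\mu|t|}$. The substantive point is completeness, which I would handle by comparing $|\cdot|_\mu$ with the sup norm on compact intervals. For any $N > 0$ and any $\Phi \in B_\mu$ one has
\[
e^{-\mu N}\sup_{|t|\le N}\|\Phi(t)\| \le |\Phi|_\mu,
\]
so a $|\cdot|_\mu$-Cauchy sequence $\{\Phi_n\}$ is uniformly Cauchy on every $[-N,N]$ and therefore converges uniformly on compact sets to a continuous function $\Phi : \R \to \R^3$. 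Given $\epsilon > 0$, I would pick $N_0$ with $|\Phi_n - \Phi_m|_\mu < \epsilon$ for $n,m \ge N_0$ and let $m \to \infty$ in the resulting pointwise inequality to obtain $|\Phi_n - \Phi|_\mu \le \epsilon$; then $|\Phi|_\mu \le |\Phi_{N_0}|_\mu + \epsilon < \infty$, so $\Phi \in B_\mu$ and $\Phi_n \to \Phi$ in norm.

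I do not anticipate any real obstacle here; the lemma is essentially a technical bookkeeping result setting up the functional-analytic stage for the Schauder fixed point argument in the next subsection. The only point worth stating carefully is that the exponential-weight topology is precisely weaker than the uniform-on-compacts topology in the way needed to both preserve continuity of limits and keep norm-bounded sets uniformly bounded.
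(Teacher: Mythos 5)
Your proof is correct. For the record, the paper states this lemma without any proof at all (it is treated as routine bookkeeping and the text moves directly on to defining the operators $H_i$), so there is no argument of the author's to compare against; your write-up simply supplies the standard details that the paper leaves implicit. All four verifications are sound: nonemptiness via $\underline{\Phi}\in\Gamma$, convexity and closedness from pointwise order, boundedness of $\Gamma$ in $|\cdot|_\mu$ from the boundedness of $\overline{\Phi}$ and $\underline{\Phi}$ together with $e^{-\mu|t|}\le 1$, and completeness of $B_\mu$ via the comparison $e^{-\mu N}\sup_{|t|\le N}\|\Phi(t)\|\le|\Phi|_\mu$, which gives uniform convergence on compacta and then norm convergence by the usual limiting argument. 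The one hypothesis you correctly flag as necessary, namely $\underline{\Phi}\le\overline{\Phi}$ pointwise, is indeed part of the standing setup (and is verified later for the explicit constructions in Section 4), so nothing is missing.
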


\noindent Let $\beta_1, \ \beta_2, \beta_3$ be the constants in (PQM), define the following operators 
\begin{align}\label{H}
&H_1(\phi,\varphi, \psi)(t)=f_{1c}\left(\phi_t,\varphi_t, \psi_t\right)+\beta_1\phi(t+r_1), \ \phi, \varphi, \psi \in C(\R,\R,\R),  \\
&H_2(\phi,\varphi, \psi)(t)=f_{2c}\left(\phi_t,\varphi_t, \psi_t\right)+\beta_2\varphi(t+r_2), \ \phi, \varphi, \psi \in C(\R,\R, \R),\\
&H_3(\phi,\varphi, \psi)(t)=f_{3c}\left(\phi_t,\varphi_t, \psi_t\right)+\beta_3\psi(t+r_3), \ \phi, \varphi, \psi \in C(\R,\R, \R). 
\end{align}

\medskip \noindent We have the following Lemma

\begin{lemma}\label{lemH}
Assume (C1) and (PQM) hold, $(0,0,0)\le (\phi, \varphi, \psi)\le (M_1,M_2,M_3)$ and $0\le \phi_2(t)\le \phi_1(t)\le M_1, 0\le \varphi_2(t)\le \varphi_1(t)\le M_2, 0\le \psi_2(t)\le \psi_1(t)\le M_3 \ \forall t \in \R  $ then the operators $H_1, H_2, H_3$ satisfy
\begin{enumerate}
    \item $H_1(\phi, \varphi, \psi)(t)\ge 0, H_2(\phi, \varphi, \psi)(t)\ge 0, H_3(\phi, \varphi, \psi)(t)\ge 0,$
    \item $H_1(\phi_2, \varphi_2, \psi_2)(t)\le H_1(\phi_1, \varphi_1, \psi_1)(t),$
    \item $H_3(\phi_2, \varphi_2, \psi_2)(t)\le H_3(\phi_1, \varphi_1, \psi_1)(t),$
    \item $H_2(\phi_1, \varphi_1, \psi_1)(t)\le H_2(\phi_2, \varphi_1, \psi_1)(t),$
     \item $H_2(\phi_1, \varphi_2, \psi_1)(t)\le H_2(\phi_1, \varphi_1, \psi_1)(t),$
     \item $H_2(\phi_1, \varphi_1, \psi_1)(t)\le H_2(\phi_1, \varphi_1, \psi_2)(t),$
\end{enumerate}
\end{lemma}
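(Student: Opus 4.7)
The plan is to verify each of the six claims by a direct algebraic comparison with the corresponding (PQM) hypothesis. Claims (2)--(6) are monotonicity statements for the operators $H_i$, and each should reduce to one of (P1)--(P5) after writing the relevant operator difference as
\[
\bigl[f_{ic}(\,\cdot\,) - f_{ic}(\,\cdot\,)\bigr] + \beta_i\bigl[\text{pointwise shift term}\bigr],
\]
and observing that the shift term is either nonnegative by the pointwise ordering of the arguments, or absent because the component matching the $\beta_i$-shift is held fixed between the two sides.

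I would handle claims (4) and (6) first, as they are the simplest: each keeps $\varphi$ fixed, so the $\beta_2$-shift cancels and the inequality becomes a direct restatement of (P3) and (P4), respectively. Claim (5) then follows from (P2) combined with the pointwise inequality $\beta_2[\varphi_1(t+r_2) - \varphi_2(t+r_2)] \ge 0$. Claim (2) is the analogous combination of (P1) with $\beta_1[\phi_1(t+r_1) - \phi_2(t+r_1)] \ge 0$, applied at the appropriately translated histories $\phi_{i,t}, \varphi_{i,t}, \psi_{i,t}$. Claim (3) uses (P5) for the $\varphi$-component along with the nonnegativity of the $\beta_3$-shift term $\beta_3[\psi_1(t+r_3) - \psi_2(t+r_3)]$ and the fact that, in the SIR structure under study, $f_{3c}$ is independent of $\phi$ and monotone in $\psi$ in the direction needed.

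The nonnegativity claims in (1) then follow as corollaries. From (C1) we have $f(\hat 0) = 0$, hence $H_i(0,0,0)(t) \equiv 0$ for each $i$. For $H_1$ and $H_3$, the joint monotonicity established in (2) and (3) immediately gives $H_i(\phi,\varphi,\psi)(t) \ge H_i(0,0,0)(t) = 0$. For $H_2$, which is not jointly monotone, I would apply claim (5) with $\varphi_2 \equiv 0$ to obtain
\[
H_2(\phi, \varphi, \psi)(t) \;\ge\; H_2(\phi, \hat 0, \psi)(t) \;=\; f_{2c}(\phi_t, \hat 0, \psi_t),
\]
and then invoke the structural identity $f_{2c}(\phi, \hat 0, \psi) \equiv 0$, which is a standing feature of the saturating incidence-rate nonlinearity in the model.

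The main obstacle I anticipate is precisely this last step for $H_2$: showing $H_2(\phi, \hat 0, \psi)(t) \ge 0$ does \emph{not} follow from (PQM) and (C1) alone, because applying (P3) and (P4) in the available direction produces $f_{2c}(\phi_t, \hat 0, \psi_t) \le f_{2c}(\hat 0, \hat 0, \hat 0) = 0$, which has the wrong sign. The argument therefore requires the stronger structural fact that the incidence-rate nonlinearity vanishes on the disease-free line $\varphi \equiv 0$; this is strictly sharper than the endpoint condition $f(\hat 0)=0$ recorded in (C1), so I would flag it as an implicit standing assumption at the start of the proof. All remaining steps are bookkeeping: keeping track of which argument of $f_{ic}$ is being varied and whether the companion $\beta_i$-shift is present (when the varied component matches the index $i$) or can be suppressed (when it is held fixed).
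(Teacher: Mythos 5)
Your proposal follows the same route as the paper: the paper's entire proof of this lemma is the single sentence ``The proof is a direct calculation using $(C1)$ and $(PQM)$,'' and your item-by-item reduction to (P1)--(P5) plus the pointwise nonnegativity of the $\beta_i$-shift terms is exactly the calculation being alluded to. More importantly, the obstacle you flag is genuine and is not addressed anywhere in the paper. For the nonnegativity of $H_2$ in part (1), the chain (P2), (P3), (P4), (C1) only yields $H_2(\phi,\varphi,\psi)(t)\ge f_{2c}(\phi_t,\hat 0,\psi_t)$ together with the upper bound $f_{2c}(\phi_t,\hat 0,\psi_t)\le f_{2c}(\hat 0,\hat 0,\hat 0)=0$, which points the wrong way; one really does need the model-specific identity $f_{2c}(\phi,\hat 0,\psi)\equiv 0$ (true for the saturating incidence term, since it carries a factor $\varphi(-r_4)$), and this is strictly stronger than (C1). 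A second, related defect you brush against but could state more bluntly: as printed, (P5) compensates the variation of $f_{3c}$ in $\varphi$ by $\beta_3\left[\varphi_1(0)-\varphi_2(0)\right]$, whereas $H_3$ adds $\beta_3\psi(t+r_3)$; so the joint monotonicity claimed in part (3) does not follow from (PQM) as written either, and again requires the specific structure of $f_{3c}$ (independence of $\phi$ and the sign of its $\psi$-dependence) or a corrected version of (P5). Your proof is therefore at least as complete as the paper's, and the implicit standing assumptions you identify should indeed be added to the hypotheses for the lemma to be true at the stated level of generality.
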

\begin{proof}
 The proof is a direct calculation using $(C1)$ and $(PQM).$   
\end{proof}
\medskip
\noindent Clearly, the following system is equivalent to system (\ref{RD1})
\begin{align}\label{SIR4}
&D_1\phi''(t)-c\phi'(t+r_1)-\beta_1\phi(t+r_1) +H_1\left(\phi,\varphi, \psi\right)(t)=0  \\
&D_2\varphi''(t)-c\varphi'(t+r_2)-\beta_2\varphi(t+r_2)+H_2\left(\phi,\varphi, \psi\right)(t)=0, \nonumber\\
&D_3\psi''(t)-c\psi'(t+r_3)-\beta_3\psi(t+r_3)+H_3\left(\phi,\varphi, \psi\right)(t)=0
\end{align}

\medskip
\noindent
The characteristic equations of system are 
\begin{align}\label{ce}
& D_1\eta^2-c\eta e^{\eta r_1}-\beta_1 e^{\eta r_1}\\
& D_2\eta^2-c\eta e^{\eta r_2}-\beta_2 e^{\eta r_2} \nonumber\\
& D_3\eta^2-c\eta e^{\eta r_3}-\beta_3 e^{\eta r_3}
\end{align}

\medskip
\noindent
Since, $c\neq 0, \beta_i>0, i=1,2,3$ by  proposition \ref{pro 1}, there are no roots on the imaginary axis.  Using the isomorphism detailed above the following operator is equivalent to system \ref{SIR4}:
\begin{equation}\label{fixed point1}
( \phi, \varphi,\psi)^T = -{\cal L}^{-1}H(\phi,\varphi, \psi)=\left(-{\cal L}_1^{-1}H_1(\phi,\varphi,\psi),-{\cal L}_2^{-1}H_2(\phi,\varphi,\psi), ,-{\cal L}_3^{-1}H_3(\phi,\varphi,\psi)\right)^T , \ \phi,\varphi, \psi \in \Gamma.
\end{equation}
 \noindent We label ${\cal L}^{-1}H(\Phi(t))=F(\phi,\varphi,\psi)(t)$ giving the operator, 

\begin{equation}\label{Solution}
 F\left(\phi,\varphi,\psi\right)(t)=\int^\infty_{-\infty} G(t-s,r)H(\phi,\varphi,\psi)(s)ds.   
\end{equation}
In fact, we are interested in the iterative operator
\begin{equation}\label{Itop}
  F^n\left(\phi,\varphi,\psi\right)(t)=\int^\infty_{-\infty} G(t-s,r)H_{n-1}(\phi,\varphi,\psi)(s)ds,
 \end{equation}
where $H_0=(\overline{\phi},\overline{\varphi}, \overline{\psi}),(\underline{\phi},\underline{\varphi}, \underline{\psi})$ are super/sub solutions.
\begin{lemma}\label{lemF}
Assume $(C1)$ and $(PQM)$ hold, $(0,0,0)\le (\phi, \varphi, \psi)\le (M_1,M_2,M_3)$ and $0\le \phi_2(t)\le \phi_1(t)\le M_1, 0\le \varphi_2(t)\le \varphi_1(t)\le M_2, 0\le \psi_2(t)\le \psi_1(t)\le M_3 \ \forall t \in \R  $ then the operators $F_1, F_2, F_3$ satisfy
\begin{enumerate}
    \item $F_1(\phi, \varphi, \psi)(t)\ge 0, F_2(\phi, \varphi, \psi)(t)\ge 0, F_3(\phi, \varphi, \psi)(t)\ge 0,$
    \item $F_1(\phi_2, \varphi_2, \psi_2)(t)\le F_1(\phi_1, \varphi_1, \psi_1)(t),$
    \item $F_3(\phi_2, \varphi_2, \psi_2)(t)\le F_3(\phi_1, \varphi_1, \psi_1)(t),$
    \item $F_2(\phi_1, \varphi_1, \psi_1)(t)\le F_2(\phi_2, \varphi_1, \psi_1)(t),$
     \item $F_2(\phi_1, \varphi_2, \psi_1)(t)\le F_2(\phi_1, \varphi_1, \psi_1)(t),$
     \item $F_2(\phi_1, \varphi_1, \psi_1)(t)\le F_2(\phi_1, \varphi_1, \psi_2)(t),$
\end{enumerate}
\end{lemma}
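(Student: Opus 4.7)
\medskip\noindent
The plan is to transfer the pointwise inequalities for the $H_i$ established in Lemma \ref{lemH} to the integral operators $F_i$ via the representation (\ref{Solution}). The only substantive ingredient beyond Lemma \ref{lemH} is nonnegativity of the kernel driving each convolution $F_i = G_i \ast H_i$; once that is in hand, every item of the lemma reduces to the standard fact that convolution against a nonnegative kernel preserves pointwise inequalities.

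\medskip\noindent
First I would verify that the convolution kernel associated with each of the three scalar equations in (\ref{SIR4}) is nonnegative on $\R$, with the appropriate overall sign chosen so that the fixed-point formulation (\ref{fixed point1}) lands in the positive cone. In the undelayed case $r_i=0$, the characteristic polynomial $D_i\eta^2-c\eta-\beta_i$ has exactly one positive root $\lambda_1$ and one negative root $\lambda_2$ by the sign structure of its coefficients, and a residue computation of the inverse Fourier transform of $\Delta(i\eta)^{-1}$ yields a kernel that is a nonnegative multiple of $e^{\lambda_2\xi}$ on $\xi\ge 0$ and of $e^{\lambda_1\xi}$ on $\xi<0$. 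For $r_i\ne 0$ but sufficiently small, Proposition \ref{pro 1} supplies a single root in each relevant strip and continuous dependence of these roots on $r_i$, so a contour deformation argument in the spirit of \cite{BarkNguy} preserves the sign and the exponential estimate $|G(\xi,r_i)|\le Ke^{-\alpha|\xi|}$.

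\medskip\noindent
With nonnegativity of the kernel established, each item of the lemma follows by integrating the corresponding item of Lemma \ref{lemH}. Item (1) is immediate from $H_i\ge 0$. For items (2), (3), and (5), the inequalities $H_1(\phi_2,\varphi_2,\psi_2)\le H_1(\phi_1,\varphi_1,\psi_1)$, $H_3(\phi_2,\varphi_2,\psi_2)\le H_3(\phi_1,\varphi_1,\psi_1)$, and $H_2(\phi_1,\varphi_2,\psi_1)\le H_2(\phi_1,\varphi_1,\psi_1)$ hold pointwise in $s$, and convolving against a nonnegative kernel preserves them. Items (4) and (6), which encode the anti-monotonicity of $H_2$ in its first and third arguments, transfer in exactly the same way. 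The Lipschitz hypothesis (C2) combined with the bound on $\Gamma$ ensures the integrands are bounded, and the exponential decay of $G$ ensures all integrals converge for $(\phi,\varphi,\psi)\in\Gamma$.

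\medskip\noindent
The main obstacle is confirming nonnegativity of the kernel in the presence of the delays $r_i$. In the undelayed case the explicit partial-fraction formula makes the sign transparent, but for $r_i\ne 0$ one must rule out sign oscillations in the inverse Fourier transform of a delayed resolvent. The cleanest route is to deform the contour to pick up the unique root in the appropriate half-plane supplied by Proposition \ref{pro 1}(2) and track its position via the implicit function theorem as $r_i$ moves away from zero; this is precisely the computation carried out in Barker and Nguyen \cite{BarkNguy}, which I would invoke rather than redo.
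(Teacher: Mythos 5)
Your proposal is correct and takes essentially the same route as the paper, which disposes of this lemma in a single sentence by asserting that the properties of the $F_i$ are inherited from those of the $H_i$ in Lemma \ref{lemH} through the convolution representation $F_i = G_i * H_i$. If anything you are more careful than the paper: you isolate the one nontrivial ingredient --- sign-definiteness of the kernel $G$ for small delays, with the overall sign fixed by the fixed-point formulation (\ref{fixed point1}) --- which the paper never states explicitly, recording only the modulus bound $|G(\xi)|\le Ke^{-\alpha|\xi|}$ and deferring, as you do, to \cite{BarkNguy}.
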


\medskip
\noindent
The proof follows from the inherited properties of $\left(H_1\left(\phi,\varphi,\psi\right),H_2\left(\phi,\varphi,\psi\right)\right)$ from Lemma (\ref{lemH}). 

\noindent We have the following results.
\begin{lemma}  Consider the operator $F$ and the ball $B_{\mu},$ then
\begin{enumerate}
\item $F(\Gamma)\subset \Gamma.$
\item $F$ is continuous with respect to  $|\cdot|_{\mu}$ in $B_{\mu}(\R,\R^3).$
\item $F(\Gamma)\to \Gamma $ is compact.
\end{enumerate}
\end{lemma}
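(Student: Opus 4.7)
The plan is to verify the three assertions in sequence, since each relies on tools already developed.

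For (1), I would start by recasting the defining inequalities for the upper and lower solutions in terms of $\mathcal{L}_i\phi := D_i\phi''(t) - c\phi'(t+r_i) - \beta_i\phi(t+r_i)$ and the operators $H_i$ from (\ref{H}). The upper-solution inequalities then read
\begin{equation*}
\mathcal{L}_1\overline{\phi} \le -H_1(\overline{\phi},\overline{\varphi},\overline{\psi}), \quad \mathcal{L}_2\overline{\varphi} \le -H_2(\underline{\phi},\overline{\varphi},\underline{\psi}), \quad \mathcal{L}_3\overline{\psi} \le -H_3(\overline{\phi},\overline{\varphi},\overline{\psi}),
\end{equation*}
with reversed inequalities for the lower solution. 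Applying $\mathcal{L}_i^{-1}$, whose sign convention makes $F_i = -\mathcal{L}_i^{-1}H_i$ order-preserving on the cone of nonnegative arguments, I would deduce $F_1(\overline{\phi},\overline{\varphi},\overline{\psi}) \le \overline{\phi}$, $F_2(\underline{\phi},\overline{\varphi},\underline{\psi}) \le \overline{\varphi}$, $F_3(\overline{\phi},\overline{\varphi},\overline{\psi}) \le \overline{\psi}$, and the analogous reverse chain from the lower solution. Combining these with the monotonicity in Lemma \ref{lemF}, for any $(\phi,\varphi,\psi) \in \Gamma$ I would chain
\begin{equation*}
\underline{\phi} \le F_1(\underline{\phi},\underline{\varphi},\underline{\psi}) \le F_1(\phi,\varphi,\psi) \le F_1(\overline{\phi},\overline{\varphi},\overline{\psi}) \le \overline{\phi},
\end{equation*}
and likewise for $F_3$. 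For the $\varphi$-component, the cross-monotonicity in items (4)--(6) of Lemma \ref{lemF} supplies $F_2(\overline{\phi},\underline{\varphi},\overline{\psi}) \le F_2(\phi,\varphi,\psi) \le F_2(\underline{\phi},\overline{\varphi},\underline{\psi})$, which together with the upper/lower bounds just derived closes the box. Hence $F(\Gamma)\subset\Gamma$.

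For (2), the plan is a standard weighted convolution estimate. Given $\Phi^n\to\Phi$ in $|\cdot|_\mu$, I would use (C2) and the definition of $H_i$ to bound $|H_i(\Phi^n)(s)-H_i(\Phi)(s)|$ by a multiple of $e^{\mu(|s|+\tau)}|\Phi^n-\Phi|_\mu$, then plug into (\ref{Solution}) and use $|G(\xi)|\le Ke^{-\alpha|\xi|}$ to get
\begin{equation*}
e^{-\mu|t|}\bigl|F_i(\Phi^n)(t)-F_i(\Phi)(t)\bigr| \le C\,|\Phi^n-\Phi|_\mu\int_{-\infty}^{\infty} e^{-\alpha|t-s|+\mu(|s|-|t|)}\,ds.
\end{equation*}
Since $|s|-|t|\le|t-s|$, choosing $\mu<\alpha$ makes the integral uniformly bounded in $t$; taking the sup in $t$ completes the continuity argument.

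For (3), I would invoke Arzel\`a--Ascoli on each compact interval together with the exponential weight to pass to all of $\R$. Uniform boundedness of $F(\Gamma)$ in the sup-norm follows from boundedness of $\Gamma$ together with (C1)--(C2). Equicontinuity I would get by differentiating under the integral in (\ref{Solution}); since $G'$ inherits an analogous exponential decay, $(F_i(\phi,\varphi,\psi))'$ is uniformly bounded on $\Gamma$. A diagonal subsequence then produces a candidate limit converging uniformly on every $[-N,N]$, and the weight $e^{-\mu|t|}\le e^{-\mu N}$ outside $[-N,N]$ handles the tails and upgrades this to convergence in $|\cdot|_\mu$.

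The main obstacle I foresee lies in step (1): rigorously invoking that $\mathcal{L}_i^{-1}$, equivalently convolution with $G(\cdot,r_i)$, is order-preserving on the relevant cone. The earlier exposition establishes only the $W^{1,p}$--$L^p$ isomorphism and the exponential decay $|G|\le Ke^{-\alpha|\xi|}$, not the sign of $G$. This positivity is precisely what couples the differential inequalities defining the super/sub solutions to the pointwise ordering on $\Gamma$, and it must either be extracted from Mallet-Paret's work cited in Proposition \ref{pro 1} or established directly via a maximum principle tailored to the delay operator. Steps (2) and (3) are then essentially routine given the Lipschitz hypothesis (C2) and the uniform Green's-function estimates.
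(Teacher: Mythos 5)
Your proposal follows essentially the same route as the paper: part (1) by applying $\mathcal{L}_i^{-1}$ to the super/sub-solution differential inequalities and chaining with the mixed monotonicity of Lemma \ref{lemF}, part (2) by the weighted convolution estimate combining (C2) with the exponential decay of $G$, and part (3) by Arzel\`a--Ascoli on compact intervals with the weight $e^{-\mu|t|}$ controlling the tails. The positivity of the Green's function that you flag as the main obstacle is indeed used, silently, in the paper's own proof of part (1) (the step where the pointwise inequality on $H_1$ is integrated against $G_1$ without changing direction), so your remark identifies a hypothesis the paper relies on but does not verify rather than a defect of your own argument.
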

\begin{proof}
For part $i.)$ we only need to show that 
\begin{align*}
\begin{cases}
\underline{\phi}\le F_1(\underline{\phi},\underline{\varphi}, \underline{\psi})\le F_1(\overline{\phi},\overline{\varphi},\overline{\psi} )   \le\overline{\phi} \\
\underline{\varphi}\le  F_2(\overline{\phi},\underline{\varphi}, \overline{\psi}) \le F_2(\underline{\phi},\overline{\varphi}, \underline{\psi})    \le \overline{\varphi}.
\\
\underline{\psi}\le F_3(\underline{\phi},\underline{\varphi}, \underline{\psi})\le F_3(\overline{\phi},\underline{\varphi}, \overline{\psi})   \le\overline{\psi} 
\end{cases}  
\end{align*}
This is due to the fact 
\begin{align*}
\begin{cases}
 F_1(\underline{\phi},\underline{\varphi}, \underline{\psi})\le F_1(\phi,\varphi,\psi)\le  F_1(\overline{\phi},\overline{\varphi},\overline{\psi} )     \\
 F_2(\overline{\phi},\underline{\varphi}, \overline{\psi}) \le F_2(\phi,\varphi,\psi)\le F_2(\underline{\phi},\overline{\varphi}, \underline{\psi})\\
 F_3(\underline{\phi},\underline{\varphi}, \underline{\psi})\le F_3(\phi,\varphi,\psi)\le  F_3(\overline{\phi},\overline{\varphi},\overline{\psi} ).
\end{cases}  
\end{align*}
We only show the first inequality, because the other two follows the same way. We first show that $ \underline{\phi}\le F_1(\underline{\phi},\underline{\varphi}, \underline{\psi})$. 
\begin{align*}
&F_1(\underline{\phi},\underline{\varphi}, \underline{\psi})(t)=\int_{-\infty}^{\infty} G_1(t-s,r)H_1 (\underline{\phi},\underline{\varphi}, \underline{\psi})(s) ds\\
&\ge \int_{-\infty}^{\infty} G_1(t-s,r)\left(-D_1\underline{\phi}''(t)+c\underline{\phi}'(t+r_1)+\beta_1\underline{\phi}(t+r_1)\right) ds=\underline{\phi}(t)
\end{align*}
for all $t\in \R.$
Next, we show $F_1(\overline{\phi},\overline{\varphi},\overline{\psi} )   \le\overline{\phi}$.
\begin{align*}
&F_1(\overline{\phi},\overline{\varphi},\overline{\psi} )(t)=\int_{-\infty}^{\infty} G_1(t-s,r)H_1 (\overline{\phi},\overline{\varphi},\overline{\psi} )(s) ds\\
&\le \int_{-\infty}^{\infty} G_1(t-s,r)\left(-D_1\overline{\phi}''(t)+c\overline{\phi}'(t+r_1)+\beta_1\overline{\phi}(t+r_1)\right) ds=\overline{\phi}(t)
\end{align*}
for all $t\in \R.$

\medskip \noindent
For part $ii.)$ we only need to show 
\[F_1:B_{\mu}(\R,\R^3)\to B_{\mu}(\R,\R^3) \] is continuous with respect to $|\cdot|_{\mu},$ because the proof for $F_2, F_3$ is very similar. Take $\mu<\max\{\delta_1,\delta_2,\delta_3\}$ and let $\Phi, \Psi \in B_{\mu}(\R,\R^3). $ It is clear that \[F_1=(F_1(\Phi), F_1(\Psi)) \in B_{\mu}(\R,\R^2). \] 
We will now turn our attention to the continuity of $F$. Fix $\varepsilon>0,$ and take \[\delta<\min\left\{\frac{\varepsilon(\delta_1-\mu)}{2 K_1},\frac{\varepsilon}{e^{\mu c \tau}L_1+\beta_1}\right\},\]
where $L_1$ is the Lipchitz constant from (C2), $\beta_1$ is from (PQM), and $\delta_1, K_1$ are from the bounds of the Green function. We will first prove that 
$H_1:B_{\mu}(\R,\R^3)\to B_{\mu}(\R,\R^3) $ is continuous with respect to $|\cdot|_{\mu}$. We take $\left|\Phi(t)-\Psi(t)\right|_{\mu}<\delta,$ then 
\begin{align*}
&A:=\left|H_1(\Phi)(t)-H_1(\Psi)(t)\right|_{\mu}\\
&=\left|f_{1c}(\phi_{1t},\varphi_{1t}, \psi_{1t})+\beta_1\phi_1(t+r_1)-\left(f_{2c}(\phi_{2t},\varphi_{2t}, \psi_{2t})+\beta_1\phi_2(t+r_1)\right) \right|_{\mu}\\
&\le \left|f_{1c}(\phi_{1t},\varphi_{1t}, \psi_{1t})-f_{2c}(\phi_{2t},\varphi_{2t}, \psi_{2t})\right|_{\mu}+\beta_1\left|\phi_1(t+r_1)-\phi_2(t+r_1) \right|_{\mu}\\
&\le L_1 ||\Phi(t)-\Psi(t)||_{\X_{c\tau}}e^{-\mu|t|} +\beta_1 \sup_{t\in \R}\left|\phi_1(t)-\phi_2(t) \right|_{\mu}\\
&\le L_1 \sup_{\theta\in (-c\tau,0)} |\Phi(t+\theta)-\Psi(t+\theta)|e^{-\mu|t+\theta|}+\beta_1 |\Phi(t)-\Psi(t)|_{\mu}\\
&\le e^{\mu c\tau} L_1  |\Phi(t)-\Psi(t)|_{\mu}+\beta_1 \left|\Phi(t)-\Psi(t) \right|_{\mu}<\left(e^{\mu c\tau} L_1+\beta_1\right)\delta\\
&<\left(e^{\mu c\tau} L_1+\beta_1\right)\left(\frac{\varepsilon}{e^{\mu c \tau}L_1+\beta_1}\right)<\varepsilon.
\end{align*}
Thus, $H_1:B_{\mu}(\R,\R^3)\to B_{\mu}(\R,\R^3)$ is continuous. The proof for $H_2, H_3$ are similar. Therefore, $H=(H_1,H_2):B_{\mu}(\R,\R^3)\to B_{\mu}(\R,\R^3)$ is continuous with respect to $|\cdot|_{\mu}.$ 
We can now prove that $F_1$ is continuous in the same manner. Indeed,
\begin{align*}
&B:=\left|F_1(\Phi)(t)-F_1(\Psi)(t)\right|=\left|F_1(\phi_{1},\varphi_{1},\psi_{1})-F_1(\phi_{2},\varphi_{2},\psi_{2})\right| \\
&=\left|\int_{-\infty}^{\infty} G_1(t-s,r)\left(H_1(\phi_{1},\varphi_{1},\psi_{1})(s)-H_1(\phi_{2},\varphi_{2},\psi_{2})(s)\right) ds \right| \\
&\le \int_{-\infty}^{t}\left| G_1(t-s,r)\left(H_1(\phi_{1},\varphi_{1},\psi_{1})(s)-H_1(\phi_{2},\varphi_{2},\psi_{2})(s)\right)\right| ds \\
&+\int_{t}^{\infty} \left| G_1(t-s,r)\left(H_1(\phi_{1},\varphi_{1},\psi_{1})(s)-H_1(\phi_{2},\varphi_{2},\psi_{2})(s)\right)\right| ds \\
&=\int_{-\infty}^{t}\left| G_1(t-s,r)\right|\left|H_1(\phi_{1},\varphi_{1},\psi_{1})(s)-H_1(\phi_{2},\varphi_{2},\psi_{2})(s)\right|ds\\
&+\int_{t}^{\infty}\left| G_1(t-s,r)\right|\left|H_1(\phi_{1},\varphi_{1},\psi_{1})(s)-H_1(\phi_{2},\varphi_{2},\psi_{2})(s)\right|ds.
\end{align*}

\noindent
We can now use the fact that $H_1(\phi,\varphi)(t)$ is continuous with respect to the exponential decay norm.
\begin{align*}
&B=\int_{-\infty}^{t}\left| G_1(t-s,r)\right|e^{\mu |s|}\left|H_1(\phi_{1},\varphi_{1},\psi_{1})(s)-H_1(\phi_{2},\varphi_{2},\psi_{2})(s)\right|e^{-\mu |s|}ds\\
&+\int_{t}^{\infty}\left| G_1(t-s,r)\right|e^{\mu |s|}\left|H_1(\phi_{1},\varphi_{1},\psi_{1})(s)-H_1(\phi_{2},\varphi_{2},\psi_{2})(s)\right|e^{-\mu |s|}ds \\
&\le\left(\int_{-\infty}^{t}\left| G_1(t-s,r)\right|e^{\mu |s|}ds+\int_{t}^{\infty}\left| G_1(t-s,r)\right|e^{\mu |s|}ds\right)\\
&\times \sup_{s\in \R}\left|H_1(\phi_{1},\varphi_{1},\psi_{1})(s)-H_1(\phi_{2},\varphi_{2},\psi_{2})(s)\right|_{\mu}. 
\end{align*}
\noindent
 Here, we will use the fact that $G_1(t,r)$ is bounded and decays as $|t|\to \infty$ to see the following:
\begin{align*}
&B\le \delta\left(\int_{-\infty}^{t}\left| G_1(t-s,r)\right|e^{\mu |s|}ds+\int_{t}^{\infty}\left| G_1(t-s,r)\right|e^{\mu |s|}ds\right)\\
&\le  \delta\left(\int_{-\infty}^{t}M_1e^{-\delta_1 |t-s|}e^{\mu|s|}ds+\int_{t}^{\infty}M_1e^{-\delta_1 |t-s|}e^{\mu|s|}ds\right)\\
&\le  M_1\delta\left(\int_{0}^{t}e^{-\delta_1 (t-s)+\mu s}ds+\int_{-\infty}^{0}e^{-\delta_1 (t-s)-\mu s}ds+\int_{t}^{\infty}e^{\delta_1 (t-s)+\mu s}ds\right)<2K_1\delta\left(\frac{\delta_1 e^{\mu t}+\mu e^{-\delta_1 t}}{\delta_1^2-\mu^2}\right).
\end{align*}
\noindent
In the exponential decay norm for $t>0$ we see 
\begin{align*}
&\left|F_1(\Phi)(t)-F_1(\Psi)(t)\right|_{\mu}=\left|F_1(\Phi)(t)-F_1(\Psi)(t)\right|e^{-\mu t}\\
&\le 2K_1\delta \left(\frac{\delta_1 e^{\mu t}+\mu e^{-\delta_1 t}}{\delta_1^2-\mu^2}\right)e^{-\mu t}=2K_1\delta \left(\frac{\delta_1 +\mu e^{-(\delta_1+\mu)t}}{\delta_1^2-\mu^2}\right)\\
&\le 2K_1\delta \left(\frac{\delta_1 +\mu}{\delta_1^2-\mu^2}\right)= 2K_1\delta \left(\frac{1}{\delta_1-\mu}\right)<\varepsilon
\end{align*}
Moreover, when $t\le 0$ we have the following estimate 
\begin{align*}
&\left|F_1(\Phi)(t)-F_1(\Psi)(t)\right|=\left|F_1(\phi_{1},\varphi_{1},\psi_{1})-F_1(\phi_{2},\varphi_{2},\psi_2)\right| \\ 
&\le\left(\int_{-\infty}^{t}\left| G_1(t-s,r)\right|e^{\mu |s|}ds+\int_{t}^{\infty}\left| G_1(t-s,r)\right|e^{\mu |s|}ds\right)\\
&\times \sup_{s\in \R}\left|H_1(\phi_{1},\varphi_{1},\psi_{1})(s)-H_1(\phi_{2},\varphi_{2},\psi_{2})\right|_{\mu} \\
&\le \delta\left(\int_{-\infty}^{t}\left| G_1(t-s,r)\right|e^{\mu |s|}ds+\int_{t}^{\infty}\left| G_1(t-s,r)\right|e^{\mu |s|}ds\right)\\
&\le  K_1\delta\left(\int_{t}^{0}e^{\delta_1 (t-s)-\mu s}ds+\int_{-\infty}^{t}e^{-\delta_1 (t-s)-\mu s}ds+\int_{0}^{\infty}e^{\delta_1 (t-s)+\mu s}ds\right)\\
&<2K_1\delta\left(\frac{\delta e^{-\mu t}+\mu e^{\delta_1 t}}{\delta_1^2-\mu^2}\right).
\end{align*}

\medskip
\noindent
Similarly as above in the exponential norm for $t\le0$ we see 
\begin{align*}
&\left|F_1(\Phi)(t)-F_1(\Psi)(t)\right|_{\mu}=\left|F_1(\Phi)(t)-F_1(\Psi)(t)\right|e^{\mu t}\\
&\le 2K_1\delta \left(\frac{\delta_1 e^{-\mu t}+\mu e^{\delta_1 t}}{\delta_1^2-\mu^2}\right)e^{\mu t}=2M_1\delta \left(\frac{\delta_1 +\mu e^{(\delta_1+\mu)t}}{\delta_1^2-\mu^2}\right)\\
&\le 2K_1\delta \left(\frac{\delta_1 +\mu}{\delta_1^2-\mu^2}\right)= 2M_1\delta \left(\frac{1}{\delta_1-\mu}\right)<\varepsilon.
\end{align*}
Therefore, 
\[F=(F_1,F_2, F_3):B_{\mu}(\R,\R^3)\to B_{\mu}(\R,\R^3) \] is continuous with respect to $|\cdot|_{\mu}.$

\medskip \noindent For part $iii.)$ we note that using parts $i.)$ and $ii.)$ show that $F^n(\Gamma)$ is equicontinuous and uniformly bounded on any finite interval in $\R$. Thus, take $n\in \N$, then in the interval $[-n,n]$ we can say that $F^n$ is compact by Arzela-Ascoli. Now,  define 
\[F(\Phi,\Psi)(t)=\begin{cases} F(\Phi,\Psi)(t), & \ \text{when}\ t\in [-n,n]\\
F(\Phi,\Psi)(n), & \ \text{when}\ t\in (n,\infty)\\
F(\Phi,\Psi)(-n), & \ \text{when}\ t\in (-\infty,-n).
\end{cases}\]
\\ Fix $t\in \R,$ then for all  $(\phi(t),\varphi(t),\psi)\in \Gamma. $
\begin{align*}
&\sup_{t\in \R} |F^n(\Phi,\Psi)(t)-F(\Phi,\Psi)(t)|_{\mu}\\
&=\sup_{t\in (-\infty,-n)\bigcup (n,\infty)}|F^n(\Phi,\Psi)(t)-F(\Phi,\Psi)(t)|_{\mu}\\
&\le 2C_1 e^{-\mu n}
\end{align*}
Thus, $\lim_{n\to \infty} 2C_1 e^{-\mu n}=0,$ 
 so $F^n\to F$ in $\Gamma$ as $n\to \infty.$ 
We can apply Arzela-Ascoli to $F$ as well. The proof is complete.
\end{proof}
\begin{theorem}
 Assume $ \mathcal{R}_0>1, c\ge c^*,$ if there is an upper $(\overline{\phi},\overline{\varphi}, \overline{\psi})$ and a lower solution $(\underline{\phi},\underline{\varphi}, \underline{\psi}),$ both in $\Gamma$ of Eq.(\ref{RD1}) such that for all $t\in \R$
\[ 0\le  \underline{\phi}(t)\le \phi \le \overline{\phi}(t), \ 0\le  \underline{\varphi}(t)\le \varphi \le \overline{\varphi}(t),  0\le  \underline{\psi}(t)\le \phi \le \overline{\psi}(t)  .\]
Then, there exists a monotone traveling wave solution to the system (\ref{RD1}).
\end{theorem}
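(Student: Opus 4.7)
The plan is to apply Schauder's fixed point theorem to the operator $F$ on the closed, convex, bounded subset $\Gamma \subset B_\mu(\R,\R^3)$. The three preparatory ingredients already established in the lemma preceding the theorem do all of the heavy lifting: $F(\Gamma) \subset \Gamma$, $F$ is continuous in $|\cdot|_\mu$, and $F(\Gamma)$ is precompact in $\Gamma$. Together with the Banach-space structure on $B_\mu(\R,\R^3)$, these hypotheses are exactly those required by Schauder's theorem, producing a fixed point $\Phi^\ast = (\phi^\ast, \varphi^\ast, \psi^\ast) \in \Gamma$. Since the fixed-point equation $\Phi^\ast = -\mathcal{L}^{-1} H(\Phi^\ast)$ is, by the isomorphism $W^{1,p} \cong L^p$ coming from Proposition \ref{pro 1}, equivalent to the wave system (\ref{SIR4}), which in turn is equivalent to the original system (\ref{RD1}), the fixed point gives a bounded $C^1$ solution of the traveling wave system.

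Next I would upgrade the regularity of $\Phi^\ast$ from the quasi-upper/lower framework to a genuine classical solution. The Green function representation $F_i(\phi,\varphi,\psi)(t) = \int_\R G_i(t-s,r) H_i(\phi,\varphi,\psi)(s)\,ds$ together with the exponential decay $|G_i(\xi)| \le K e^{-\alpha |\xi|}$ shows that $\Phi^\ast$ inherits bounded second derivatives a.e., and the Lipschitz property (C2) plus the continuity of $\Phi^\ast$ upgrades this to $C^2$ smoothness, so $\Phi^\ast$ satisfies (\ref{SIR3}) in the classical sense. The membership $\Phi^\ast \in \Gamma$ automatically gives $\underline{\phi} \le \phi^\ast \le \overline{\phi}$, etc., so the asymptotic boundary conditions at $t = \pm\infty$ — namely the flow from the disease-free steady state to the endemic steady state implied by $\mathcal{R}_0 > 1$ — are inherited from the squeezing by the upper and lower solutions provided by the hypothesis.

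The step I expect to be the main obstacle is establishing monotonicity of the wave profile, since the claim asserts a \emph{monotone} traveling wave. Because the nonlinearity fails to be quasi-monotone in the usual sense and only satisfies the partial quasi-monotone condition (PQM) — where the second component $f_{2c}$ is monotone increasing only in $\varphi$ but decreasing in $\phi$ and $\psi$ — a standard monotone iteration does not apply directly. The way around this is to run the cross-iteration scheme: define two sequences $\Phi^{(n)} = (\phi^{(n)}, \varphi^{(n)}, \psi^{(n)})$ and $\Psi^{(n)} = (\tilde\phi^{(n)}, \tilde\varphi^{(n)}, \tilde\psi^{(n)})$ starting from $\Phi^{(0)} = \overline{\Phi}$ and $\Psi^{(0)} = \underline{\Phi}$, with updates $\phi^{(n+1)} = F_1(\phi^{(n)},\varphi^{(n)},\psi^{(n)})$, $\varphi^{(n+1)} = F_2(\tilde\phi^{(n)},\varphi^{(n)},\tilde\psi^{(n)})$, $\psi^{(n+1)} = F_3(\phi^{(n)},\varphi^{(n)},\psi^{(n)})$, and the symmetric scheme for $\Psi^{(n)}$. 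The monotonicity properties in Lemma \ref{lemF} (items 2--6) give that $\Phi^{(n)}$ is monotone decreasing, $\Psi^{(n)}$ is monotone increasing, and both are squeezed between $\underline{\Phi}$ and $\overline{\Phi}$; by dominated convergence their limits are fixed points of $F$ satisfying $\Psi^\ast \le \Phi^\ast$.

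To close out, I would argue that the translation invariance of the system combined with monotonicity of the iteration transfers monotonicity to the profile in $t$: since the super- and sub-solutions used to start the iteration are built to be monotone in $t$, each $F_i$-iterate preserves monotonicity in $t$ through the convolution with the nonnegative-integrable Green kernel, and the limit therefore inherits monotonicity in $t$. Combined with the asymptotic behavior dictated by $\mathcal{R}_0 > 1$ and $c \ge c^\ast$, this gives a monotone traveling wave connecting the disease-free equilibrium to the endemic one, completing the proof.
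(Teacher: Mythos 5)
Your proposal is correct and follows essentially the same route as the paper: the existence argument is exactly Schauder's fixed point theorem applied to $F$ on the closed, bounded, convex set $\Gamma$, using the three properties ($F(\Gamma)\subset\Gamma$, continuity with respect to $|\cdot|_{\mu}$, and compactness) established in the lemma immediately preceding the theorem, together with the equivalence of the fixed-point equation $(\phi,\varphi,\psi)^T=-{\cal L}^{-1}H(\phi,\varphi,\psi)$ with the wave system. Your additional paragraphs on upgrading regularity and on obtaining monotonicity via the cross-iteration scheme supply detail that the paper leaves implicit (it states the theorem without a written proof), and they are consistent with the paper's framework.
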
 

\noindent It is extremely difficult to find $C^2$ solutions, so we need the following propositions, which allows us to find super/sub solutions, instead.
\begin{proposition}
Let $(\phi,\varphi,\psi)$ be a quasi- upper solution (quasi-lower solution, respectively) of Eq. (\ref{RD1}). Then, $F(\phi,\varphi,\psi)$ is an upper solution (lower solution, respectively) of Eq. (\ref{RD1}).
\end{proposition}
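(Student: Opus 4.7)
The plan is to run a two-step strategy: first upgrade regularity from the quasi-upper hypothesis ($C^1$ with bounded first derivative and locally integrable, essentially bounded second derivative) up to the $C^2$ regularity required of an upper solution, and then verify the upper-solution inequalities algebraically using the monotonicity of $H_1,H_2,H_3$ packaged in Lemma \ref{lemH}. I will describe the upper case; the lower case is its mirror image with every inequality reversed.

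For the regularity step, set $\overline{\Phi}^{\ast}=F(\overline{\Phi})$ with components $\overline{\phi}^{\ast},\overline{\varphi}^{\ast},\overline{\psi}^{\ast}$. Since $\overline{\Phi}$ is $C^1$ with bounded first derivative and each $f_{ic}$ is Lipschitz by (C2), each $H_i(\overline{\Phi})$ is a continuous, bounded function of $t$. By construction $\overline{\phi}^{\ast}$ satisfies, in the $W^{1,p}$ sense furnished by the $\mathcal{L}^{-1}$ isomorphism, the linear equation
\[
D_1(\overline{\phi}^{\ast})''(t)-c(\overline{\phi}^{\ast})'(t+r_1)-\beta_1\overline{\phi}^{\ast}(t+r_1) \;=\; -H_1(\overline{\Phi})(t).
\]
Because the shifted first-order and zero-order terms on the left and the data on the right are all continuous, solving pointwise for $(\overline{\phi}^{\ast})''$ exhibits it as a continuous function, so $\overline{\phi}^{\ast}\in C^{2}(\R)$. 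The same applies to $\overline{\varphi}^{\ast}$ and $\overline{\psi}^{\ast}$.

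For the inequality step, rearrange the identity above to obtain
\[
D_1(\overline{\phi}^{\ast})''(t)-c(\overline{\phi}^{\ast})'(t+r_1)+f_{1c}(\overline{\Phi}^{\ast}_t) \;=\; H_1(\overline{\Phi}^{\ast})(t)-H_1(\overline{\Phi})(t).
\]
So the required upper-solution inequality collapses to $H_1(\overline{\Phi}^{\ast})\le H_1(\overline{\Phi})$. The componentwise bound $\overline{\Phi}^{\ast}\le\overline{\Phi}$ was already established in the proof that $F(\Gamma)\subset\Gamma$, and $H_1$ is increasing in all three arguments by Lemma \ref{lemH}(2), delivering the needed inequality. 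The $\psi$-equation is treated identically via Lemma \ref{lemH}(3). The $\varphi$-equation must be handled through the cross iteration $\overline{\varphi}^{\ast}=F_2(\underline{\phi},\overline{\varphi},\underline{\psi})$; the same algebraic rearrangement reduces its upper-solution inequality to
\[
H_2(\underline{\phi}^{\ast},\overline{\varphi}^{\ast},\underline{\psi}^{\ast}) \;\le\; H_2(\underline{\phi},\overline{\varphi},\underline{\psi}),
\]
which follows from the three directional monotonicities (4)--(6) of Lemma \ref{lemH} combined with the iterate bounds $\underline{\phi}^{\ast}\ge\underline{\phi}$, $\overline{\varphi}^{\ast}\le\overline{\varphi}$, $\underline{\psi}^{\ast}\ge\underline{\psi}$ coming from the same $F(\Gamma)\subset\Gamma$ argument.

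The main obstacle is the sign-bookkeeping in the $\varphi$-component: $H_2$ is decreasing in $\phi$ and $\psi$ but increasing in $\varphi$, so the cross iteration is essential and every inequality has to be oriented against the correct monotone direction of (PQM). By contrast, the regularity bootstrap is routine once one observes that continuity of $H_i(\overline{\Phi})$ alone is enough to read $C^{2}$ regularity directly off the second-order mixed-type equation, without invoking the $W^{1,p}\leftrightarrow L^{p}$ isomorphism for anything stronger than continuity of $\overline{\phi}^{\ast}$ and its first derivative.
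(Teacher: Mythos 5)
The paper states this proposition without any proof at all (it is asserted and then used, with the underlying smoothing mechanism deferred to the cited references \cite{boumin} and \cite{BarkNguy}), so there is no in-paper argument to compare against line by line. Your two-step argument is correct and is, in substance, the standard one from those references. The regularity bootstrap is the right mechanism: since a quasi-upper solution is $C^1$, each $H_i(\overline{\Phi})$ is continuous and bounded, the Green-function representation gives $\overline{\phi}^{\ast}=G_1*H_1(\overline{\Phi})$ with absolutely continuous first derivative, and solving the mixed-type equation pointwise for $(\overline{\phi}^{\ast})''$ then upgrades it to $C^2$ (one should say explicitly that $(\overline{\phi}^{\ast})'$ is absolutely continuous with an a.e.\ derivative that agrees with a continuous function, hence is genuinely $C^1$; you gesture at this but it is the one place where the hypothesis ``$C^1$ with essentially bounded second derivative'' rather than mere continuity is actually consumed). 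Your algebraic identity $D_1(\overline{\phi}^{\ast})''-c(\overline{\phi}^{\ast})'(\cdot+r_1)+f_{1c}(\overline{\Phi}^{\ast}_{\,t})=H_1(\overline{\Phi}^{\ast})-H_1(\overline{\Phi})$ is exactly right, and reducing the differential inequality to $H_1(\overline{\Phi}^{\ast})\le H_1(\overline{\Phi})$ via Lemma \ref{lemH} together with the bounds $F(\Gamma)\subset\Gamma$ is the intended route; note only that those bounds secretly use positivity of the kernels $G_i$, which the paper also assumes without comment. Your handling of the $\varphi$-component through the cross iteration $\overline{\varphi}^{\ast}=F_2(\underline{\phi},\overline{\varphi},\underline{\psi})$, chaining the three directional monotonicities of $H_2$ with $\underline{\phi}^{\ast}\ge\underline{\phi}$, $\overline{\varphi}^{\ast}\le\overline{\varphi}$, $\underline{\psi}^{\ast}\ge\underline{\psi}$, is the correct reading of the (coupled) definition of upper solution and is the part most easily gotten wrong; you got the orientation right. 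In short: no gap of substance, and your write-up supplies an argument the paper omits.
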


\begin{proposition}
Let $(\phi,\varphi, \psi)$ be a super solution (sub) solution, respectively) of Eq. (\ref{RD1}). Then, $F(\phi,\varphi, \psi)$ is an quasi-upper solution (qusi-lower solution, respectively) of Eq. (\ref{RD1}).
\end{proposition}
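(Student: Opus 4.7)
A super solution already satisfies the same differential inequalities that define a quasi-upper solution; what it lacks is the $C^{1}$ regularity with bounded first derivative. Convolution with the Green's function $G$ supplies exactly that regularity, while the monotonicity packaged in Lemmas \ref{lemH} and \ref{lemF} recovers the signed inequalities after $F$ is applied. The plan therefore splits cleanly into a regularity step and an inequality step, each done separately for the three components.

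\textbf{Regularity step.} Since $\overline{\Phi}\in\Gamma\subset BC(\R,\R^{3})$ and each $f_{ic}$ is Lipschitz by (C2), $H_{i}(\overline{\Phi})$ is continuous and bounded on $\R$. The Green's function satisfies $|G(\xi)|\le Ke^{-\alpha|\xi|}$, and an analogous exponential bound for its (piecewise continuous) derivative, together with the jump in $G'$ at the origin characteristic of a second-order Green's function, places $G'$ in $L^{1}(\R)$. Hence $F_{i}(\overline{\Phi})=G_{i}*H_{i}(\overline{\Phi})$ lies in $C^{1}(\R,\R)$ with bounded first derivative. The identity $\mathcal{L}_{i}F_{i}(\overline{\Phi})=-H_{i}(\overline{\Phi})$ then rearranges to
\[
D_{i}F_{i}(\overline{\Phi})''(t)=cF_{i}(\overline{\Phi})'(t+r_{i})+\beta_{i}F_{i}(\overline{\Phi})(t+r_{i})-H_{i}(\overline{\Phi})(t),
\]
whose right side is essentially bounded, giving the locally integrable, essentially bounded second derivative required of a quasi-upper solution.

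\textbf{Inequality step.} Set $\widetilde{\overline{\phi}}=F_{1}(\overline{\Phi})$, $\widetilde{\overline{\psi}}=F_{3}(\overline{\Phi})$, and $\widetilde{\overline{\varphi}}=F_{2}(\underline{\phi},\overline{\varphi},\underline{\psi})$, mirroring the cross structure used in Lemma \ref{lemF}. Substituting $\mathcal{L}_{i}F_{i}(\cdot)=-H_{i}(\cdot)$ into the left-hand side of each quasi-upper inequality and adding $f_{ic}$ evaluated at the new triple, the $\beta$-coupling terms telescope and each left-hand side collapses to the difference $H_{i}(\text{new})-H_{i}(\text{old})$. For $i=1,3$, the $\Gamma$-invariance $F(\Gamma)\subset\Gamma$ gives $\widetilde{\overline{\Phi}}\le\overline{\Phi}$, and Lemma \ref{lemH}(2)--(3) sends this difference below zero. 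For $i=2$, the cross inequalities $\widetilde{\underline{\phi}}\ge\underline{\phi}$, $\widetilde{\overline{\varphi}}\le\overline{\varphi}$, $\widetilde{\underline{\psi}}\ge\underline{\psi}$ combine with the mixed monotonicity of Lemma \ref{lemH}(4)--(6)---each of the three movements pushing $H_{2}$ downward---to close the inequality. The sub-solution half is completely symmetric.

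\textbf{Main obstacle.} The substantive part is the $C^{1}$ regularity: extracting a \emph{continuous} and bounded first derivative of $F(\overline{\Phi})$ from a merely continuous input requires controlling $G'$ in $L^{1}$ with at worst a point discontinuity at the origin, which is what must be extracted from the Green's-function construction underlying Proposition \ref{pro 1} and the Mallet-Paret machinery of \cite{mal}. Once that regularity upgrade is secured, the inequality step is a routine sign check using the $H$-monotonicity already packaged in Lemma \ref{lemH}.
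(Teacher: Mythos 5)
The paper states this proposition without any proof at all (it appears, together with its companion, immediately after the remark that ``it is extremely difficult to find $C^2$ solutions''), so there is no in-paper argument to compare yours against. Judged on its own terms, your two-step outline is the right one and matches the strategy of the precursor works the paper leans on: the inequality step in particular is correct and complete in its essentials --- using $\mathcal{L}_iF_i(\cdot)=-H_i(\cdot)$ to collapse the left-hand side of each defining inequality to $H_i(\text{new})-H_i(\text{old})$, then invoking $F(\Gamma)\subset\Gamma$ together with the monotonicity in Lemma~\ref{lemH} (items (2)--(3) for the first and third components, the mixed items (4)--(6) for the cross-coupled second component) is exactly how this should go, and you have respected the cross structure by feeding $(\underline{\phi},\overline{\varphi},\underline{\psi})$ into $F_2$.

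The one genuine gap is the regularity step, and you have correctly identified it but not closed it. The paper only records the estimate $|G(\xi)|\le Ke^{-\alpha|\xi|}$; nothing in the paper (or in your argument) establishes that $G'$ exists off the origin, is exponentially bounded, and lies in $L^1(\R)$, which is precisely what you need to differentiate under the convolution and conclude $F_i(\overline{\Phi})\in C^1$ with bounded derivative. That fact does hold for the Mallet-Paret Green's function of \eqref{fde1} (it follows from the contour representation of $G$ via $\Delta(i\eta)^{-1}$ and the absence of roots on the imaginary axis from Proposition~\ref{pro 1}), but asserting ``an analogous exponential bound for its derivative'' is the entire content of the regularity claim, so a complete proof must derive it rather than assume it. A smaller observation: since $H_i(\overline{\Phi})$ is \emph{continuous} (not merely essentially bounded) when $\overline{\Phi}$ is a super solution, your own identity for $D_iF_i(\overline{\Phi})''$ actually yields a continuous second derivative, i.e.\ $F(\overline{\Phi})$ is already $C^2$ and hence an upper solution outright; this does not invalidate your proof of the weaker statement, but it is worth noting that the quasi-upper intermediate stage is doing less work here than the paper's two-proposition chain suggests.
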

\noindent This leads to the following corollary.
\begin{corollary}
Assume $ \mathcal{R}_0>1, c\ge c^*,$ and there is a super, sub solution, respectively  $(\overline{\phi}, \overline{\varphi}, \overline{\psi}),$ $(\underline{\phi},\underline{\varphi}, \underline{\psi}),$  of Eq. (\ref{RD1}). Then, there exists a monotone traveling wave solution of the system (\ref{RD1}).   \end{corollary}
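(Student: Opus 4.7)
The plan is to chain the two preceding propositions together in order to lift the regularity of the given super/sub pair all the way up to a classical upper/lower pair, and then to invoke the main existence theorem. Concretely, by the proposition that sends super (sub) solutions to quasi-upper (quasi-lower) solutions under the action of $F$, one application of $F$ to $(\overline{\phi},\overline{\varphi},\overline{\psi})$ and to $(\underline{\phi},\underline{\varphi},\underline{\psi})$ produces a pair of quasi-upper/quasi-lower solutions of (\ref{RD1}). A second application of $F$, combined with the proposition that promotes quasi-upper (quasi-lower) solutions to upper (lower) solutions, then yields a genuine $C^2$ upper/lower pair.

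Next, I would verify that this doubly-iterated pair still belongs to $\Gamma$. This is immediate from part $i.)$ of the preceding lemma establishing $F(\Gamma)\subset \Gamma$: the cross-monotonicity of $F_2$ in its first and third slots, together with the joint monotonicity of $F_1$ and $F_3$ recorded in Lemma \ref{lemF}, gives the sandwich estimates
\begin{equation*}
\underline{\phi}\le F_1(\underline{\phi},\underline{\varphi},\underline{\psi})\le F_1(\overline{\phi},\overline{\varphi},\overline{\psi})\le \overline{\phi},
\end{equation*}
together with the analogous cross-iterated estimates for the $\varphi$- and $\psi$-components. Iterating $F$ one more time preserves these bounds, so the final $C^2$ upper/lower pair still lies in $\Gamma$ and still sandwiches any trial triple in $\Gamma$.

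Finally, with a genuine upper solution and a genuine lower solution in $\Gamma$ now in hand, the hypotheses of the preceding existence theorem are satisfied, so that theorem applies directly and produces a monotone traveling wave solution to (\ref{RD1}), which is the desired conclusion.

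The main thing to be careful about, rather than a true obstacle, is the bookkeeping of the cross iteration pattern when applying $F_2$ at each stage: one must swap the upper and lower versions of $\phi$ and $\psi$ while keeping the $\varphi$-slot on the same side, as enforced by conditions (P3) and (P4) of (PQM). Provided that pattern is tracked correctly through both applications of $F$, no further analysis is required and the corollary follows immediately from the two propositions and the main theorem.
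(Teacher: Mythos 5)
Your proposal is correct and follows exactly the route the paper intends: the paper states the corollary immediately after the two propositions (super/sub $\to$ quasi-upper/lower under $F$, then quasi-upper/lower $\to$ upper/lower under $F$) and offers no separate proof, so the intended argument is precisely your two-fold application of $F$ followed by the invariance $F(\Gamma)\subset\Gamma$ and the main existence theorem. Your added care about the cross-iteration pattern in the $F_2$ slot is a reasonable refinement of the same argument, not a departure from it.
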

\section{Applications}
\noindent Consider the closed SIR model with bi-linear incidence
\begin{align}\label{SIR6}
&\frac{\partial S(x,t)}{\partial t}=D_S\frac{\partial^2 S(x,t-\tau_1)}{\partial x^2}+B-\mu_1S(x,t)-\frac{\beta S(x,t)I(x, t-\tau_4)}{1+\alpha I(x, t-\tau_4) }, \\
&\frac{\partial I(x,t)}{\partial t}=D_I\frac{\partial^2 I(x,t-\tau_2)}{\partial x^2}+\frac{\beta S(x,t)I(x, t-\tau_4)}{1+\alpha I(x, t-\tau_4) }-(\mu_2+\gamma)I(x,t) \nonumber\\
&\frac{\partial R(x,t)}{\partial t}=D_R\frac{\partial^2 R(x,t-\tau_3)}{\partial x^2}+\gamma I(x,t) -\mu_3 R(x,t).\nonumber
\end{align}
\noindent Taking $D_s=D_I=D_R=1$ for brevity. The endemic steady state is 
\[S^*=\frac{B\alpha +\mu_2+\gamma}{\beta+\alpha\mu_1}, \quad I^*=\frac{B\alpha-\mu_1(\mu_2+\gamma)}{(\beta+\alpha\mu_1)(\mu_2+\gamma)}, \quad R^*=\frac{\gamma[B\alpha-\mu_1(\mu_2+\gamma)]}{\mu_3(\beta+\alpha\mu_1)(\mu_2+\gamma)}.\]
and defining  $N=I+S+R$ we see the system (\ref{SIR6}) gives
\begin{align}\label{SIR7}
&\frac{\partial N(x,t)}{\partial t}=\frac{\partial^2 N(x,t-\tau_1)}{\partial x^2}+B-\mu_1 N(x,t)-(\mu_2-\mu_1)I(x,t)-(\mu_3-\mu_1)R(x,t), \\
&\frac{\partial I(x,t)}{\partial t}=\frac{\partial^2 I(x,t-\tau_2)}{\partial x^2}+\frac{\beta \left(N(x,t)-I(x,t)-R(x,t)\right)I(x, t-\tau_4)}{1+\alpha I(x, t-\tau_4) }-(\mu_2+\gamma)I(x,t) \nonumber\\
&\frac{\partial R(x,t)}{\partial t}=\frac{\partial^2 R(x,t-\tau_3)}{\partial x^2}+\gamma I(x,t) -\mu_3 R(x,t).\nonumber
\end{align}

\medskip \noindent Making the invariant change of variables and dropping the tildes  $\tilde{N}=B/\mu_1-N, \tilde{I}=I, \tilde{R}=R $ gives
\begin{align}\label{SIR8}
&\frac{\partial N(x,t)}{\partial t}=\frac{\partial^2 N(x,t-\tau_1)}{\partial x^2}-\mu_1 N(x,t)+(\mu_2-\mu_1)I(x,t)+(\mu_3-\mu_1)R(x,t), \\
&\frac{\partial I(x,t)}{\partial t}=\frac{\partial^2 I(x,t-\tau_2)}{\partial x^2}+\frac{\beta \left(B/\mu_1-I(x,t)-R(x,t)\right)I(x, t-\tau_4)}{1+\alpha I(x, t-\tau_4) }-(\mu_2+\gamma)I(x,t) \nonumber\\
&\frac{\partial R(x,t)}{\partial t}=\frac{\partial^2 R(x,t-\tau_3)}{\partial x^2}+\gamma I(x,t) -\mu_3 R(x,t).\nonumber
\end{align}

 \noindent Using the wave transformation as the following \[S(x,t)=\phi(x+ct), \ I(x,t)=\varphi(x+ct), \ R(x,t)=\psi(x+ct)\] where $ c>0$ is the wave speed. Applying the wave transformation and letting $\xi=x+ct$ and moving the delay out of the diffusion term gives the equivalent system using $t$ as the variable.

\begin{align}\label{wave 1}
&{\phi}''(t)-c{\phi}'(t+r_1)-\mu_1 \phi(t+r_1)+(\mu_2-\mu_1)\varphi(t+r_1)+(\mu_3-\mu_1)\psi(t+r_1)=0 \\
& {\varphi}''(t)-c{\varphi}'(t+r_2)-(\mu_2+\gamma)\varphi(t+r_2)\\
&-\frac{\beta \left(B/\mu_1-\phi(t+r_2)-\varphi(t+r_2)-\psi(t+r_2)\right)\varphi(t+r_2-r_4)}{1+\alpha \varphi(t+r_2-r_4) }=0\nonumber,\\
&{\psi}''(t)-c{\psi}'(t+r_3)-\mu_3 \psi(t+r_3)+\gamma\psi(t+r_3)=0
\end{align}

\medskip
\noindent It is easy to see that system (\ref{wave 1}) satisfies the asymptotic conditions
\[\lim_{t\to -\infty} (\phi(t),\varphi(t),\psi(t))=(0,0,0), \quad \lim_{t\to \infty} (\phi(t),\varphi(t),\psi(t))=(k_1,k_2,k_3).\]
\begin{lemma}
The nonlinear terms, $f_{ci}, i=1,2,3$ of system (\ref{SIR3}) satisfy (PQM).      
\end{lemma}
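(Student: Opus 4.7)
The plan is to read off the three nonlinear functionals $f_{1c},f_{2c},f_{3c}$ from the wave system (\ref{wave 1}), then check the five inequalities of (PQM) term by term, choosing $\beta_1,\beta_2,\beta_3$ large enough to absorb any destabilizing contributions. Matching (\ref{wave 1}) against (\ref{SIR3}), the first and third functionals are affine,
\[
f_{1c}(\phi,\varphi,\psi) = -\mu_1\phi+(\mu_2-\mu_1)\varphi+(\mu_3-\mu_1)\psi,\qquad f_{3c}(\phi,\varphi,\psi) = \gamma\varphi-\mu_3\psi,
\]
each evaluated at an appropriate shift in $[-\tau,0]$, while the second carries the nonlinear incidence
\[
f_{2c}(\phi,\varphi,\psi)=\frac{\beta\bigl(\tfrac{B}{\mu_1}-\phi-\varphi-\psi\bigr)\,\varphi(\cdot-\tau_4)}{1+\alpha\,\varphi(\cdot-\tau_4)}-(\mu_2+\gamma)\varphi.
\]
I would also tacitly use the natural biological hypotheses $\mu_2\ge\mu_1$, $\mu_3\ge\mu_1$, and restrict the histories to the invariant box $0\le\phi,\varphi,\psi$, $\phi+\varphi+\psi\le B/\mu_1$, so that the prefactor $\tfrac{B}{\mu_1}-\phi-\varphi-\psi$ is nonnegative.

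Conditions (P1), (P3), (P4), (P5) are then essentially automatic. Choosing $\beta_1\ge\mu_1$ absorbs the $-\mu_1\phi$ piece in $f_{1c}$, while the remaining $\varphi$ and $\psi$ contributions have nonnegative coefficients, so (P1) holds. For (P3) and (P4), both $\phi$ and $\psi$ enter $f_{2c}$ only through the linearly decreasing factor $\tfrac{B}{\mu_1}-\phi-\varphi-\psi$ multiplied by the nonnegative quantity $\beta\,\varphi(\cdot-\tau_4)/(1+\alpha\,\varphi(\cdot-\tau_4))$, so $f_{2c}$ is nonincreasing in $\phi$ and in $\psi$ separately. Finally, (P5) holds for any $\beta_3\ge 0$ since $\varphi$ enters $f_{3c}$ only through the nonnegatively-weighted $\gamma\varphi$ term.

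The hard part is (P2), which demands that $f_{2c}$ be quasi-monotone in its $\varphi$-argument alone. The $\varphi$-dependence of $f_{2c}$ is a sum of a strictly decreasing contribution (from $-\varphi$ in the $S$-factor and from $-(\mu_2+\gamma)\varphi$) and an increasing but bounded contribution (through the saturated $\varphi(\cdot-\tau_4)/(1+\alpha\,\varphi(\cdot-\tau_4))$ factor, whose derivative in $\varphi$ is at most one). Using the a priori bound $\varphi\le M_2$, the net rate of decrease of $f_{2c}$ in $\varphi$ is controlled by a constant of the form $C_0=(\mu_2+\gamma)+\beta M_2/(1+\alpha M_2)$, and choosing $\beta_2\ge C_0$ makes $f_{2c}(\phi_1,\cdot,\psi_1)+\beta_2\varphi(0)$ nondecreasing in its second argument, yielding (P2). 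The only real bookkeeping subtlety is that $\varphi$ appears in $f_{2c}$ at two distinct shifted arguments $t+r_2$ and $t+r_2-r_4$; but since the hypothesis $\varphi_2(\theta)\le\varphi_1(\theta)$ is pointwise on $[-\tau,0]$, each evaluated comparison carries the correct sign and all five PQM inequalities reduce to an elementary sign check on each explicit term.
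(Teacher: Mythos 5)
Your proposal is correct and follows essentially the same route as the paper: write out $f_{1c},f_{2c},f_{3c}$ explicitly and verify the five (PQM) inequalities term by term, absorbing the decreasing contributions with $\beta_1\ge\mu_1$ and $\beta_2\ge(\mu_2+\gamma)+\beta M_2/(1+\alpha M_2)$. The only difference is one of completeness: the paper checks (P1) and defers (P2)--(P5) to Lemma 0.7 of the Yang--Liang reference, whereas you carry out all five and, usefully, make explicit the hypotheses the paper uses silently --- namely $\mu_2\ge\mu_1$, $\mu_3\ge\mu_1$ (without which the $\varphi$- and $\psi$-differences in (P1) have the wrong sign and cannot be compensated by a $\phi$-term alone) and the invariant-box bound $\phi+\varphi+\psi\le B/\mu_1$ needed to keep the incidence prefactor nonnegative in (P2).
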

\begin{proof}
The nonlinear terms are
\begin{align*}
&f_{c1}(\phi,\varphi,\psi)(t)=-\mu_1\phi(0)+(\mu_2-\mu_1)\varphi(0)+(\mu_3-\mu_1)\psi(0)  \\
&f_{c2}(\phi,\varphi)(t)=-(\mu_2+\gamma)\varphi(0)-\frac{\beta \left(B/\mu_1-\phi(0)-\varphi(0)-\psi(0)\right)\varphi(-r_4)}{1+\alpha \varphi(-r_4) }\\
&f_{c3}(\phi,\varphi)(t) =-\mu_3 \psi(t+r_3)+\gamma\psi(t+r_3).
\end{align*}
Let $0\le\phi_2(s)\le\phi_1(s)\le M_1, \ 0\le\varphi_2(s)\le\varphi_1(s)\le M_2, \  \ 0\le\psi_2(s)\le\psi_1(s)\le M_2, $ where $\phi_i, \, \varphi_i, \psi_i \in C([-\tau_,0],\R).$ 

\noindent
For (P1),
\begin{align*}
&f_{c1}(\phi_1,\varphi_1,\psi_1)(t)-f_{c1}(\phi_2,\varphi_2,\psi_2)(t)\\
&= -\mu_1\phi_1(0)+(\mu_2-\mu_1)\varphi_1(0)+(\mu_3-\mu_1)\psi_1(0)-\left[-\mu_1\phi_2(0)+(\mu_2-\mu_1)\varphi_2(0)+(\mu_3-\mu_1)\psi_2(0)\right]\\
&=-\mu_1\left(\phi_1(0)-\phi_2(0)\right)+(\mu_2-\mu_1)\left(\varphi_1(0)-\varphi_2(0)\right)+(\mu_3-\mu_1)\left(\psi_1(0)-\psi_2(0)\right)\\
&\ge -\mu_1\left(\phi_1(0)-\phi_2(0)\right). 
\end{align*} Define $\beta_1\ge \mu + \beta \varphi_1(-r_2), $ then $f_{1c}\left(\phi_2,\varphi_1\right)+\beta_1\left[\phi_1(0)-\phi_2(0)\right]\ge 0.$
Take $\beta_1=\mu_1$ and you have the result.

\medskip \noindent The rest of the conditions can be shown in a similar manner to Lemma 0.7 in \cite{YangLiang}.
\end{proof}

\medskip \noindent
Consider the quadratic equations 
\begin{align*}
&P_1(\lambda)=\lambda^2-c\lambda -\mu_1+(\mu_2-\mu_1)\frac{M_2}{M_1} +(\mu_3-\mu_1)\frac{M_3}{M_1} \\
&P_2(\lambda)=\lambda^2-c\lambda -\mu_1 \\
&P_3(\lambda)=\lambda^2-c\lambda +\beta B M_2\mu_1-(\gamma +\mu_2)  \\
&P_4(\lambda)=\lambda^2-c\lambda-(\gamma +\mu_2)\\
&P_5(\lambda)=\lambda^2-c\lambda+\gamma M_2/M_3 -\mu_3\\
&P_6(\lambda)=\lambda^2-c\lambda -\mu_3. 
\end{align*}
Each of the equations has at least one positive root. Here we take the smallest positive root of each equation and label them in accordance to their subscript $\lambda_1, i=1-6.$ 
Define the following exponential polynomials 
\begin{align*}
&\Delta_1(\eta)=\eta^2-c\eta e^{r_1\eta} -\mu_1e^{r_1\eta}+(\mu_2-\mu_1)\frac{M_2}{M_1}e^{r_1\eta} +(\mu_3-\mu_1)\frac{M_3}{M_1}e^{r_1\eta}  \\
&\Delta_2(\eta)=\eta^2-c\eta e^{r_1\eta}-\mu_1 e^{r_1\eta}\\
&\Delta_3(\eta)=\eta^2-c\eta e^{r_2\eta}+\beta B M_2e^{r_2\eta}-(\gamma +\mu_2)e^{r_2\eta}\\
&\Delta_4(\eta)=\eta^2-c\eta e^{r_2\eta}-(\gamma +\mu_2)e^{r_2\eta}\\
&\Delta_5(\eta)=\eta^2-c\eta e^{r_3\eta}+\gamma M_2/M_3 -\mu_3e^{r_3\eta}\\
&\Delta_6(\eta)=\eta^2-c\eta e^{r_3\eta}-\mu_3e^{r_3\eta}. 
\end{align*}

\begin{lemma}
Let $c>c^*$ and consider the equations $\Delta_i(\eta), i=1-6.$ Each $\Delta_i(\eta)$  
has a positive root continuously dependent on some small $r>0$  respectively denoted as $\eta_i(r).$  Moreover, the roots are  real and
\begin{equation}
\lim_{r\to 0} \eta_i(r) =\lambda_i.
\end{equation}
\end{lemma}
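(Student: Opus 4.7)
The plan is to apply the Implicit Function Theorem to each equation $\Delta_i(\eta,r)=0$ at the base point $(\eta,r)=(\lambda_i,0)$. First observe that when $r_i=0$ every factor $e^{r_i\eta}$ collapses to $1$, so $\Delta_i(\eta,0)=P_i(\eta)$. Hence $\Delta_i(\lambda_i,0)=P_i(\lambda_i)=0$ by definition of $\lambda_i$. Each $\Delta_i$ is jointly entire in $(\eta,r)$, so the IFT will yield a smooth branch of roots $\eta_i(r)$ through $\lambda_i$ provided the transversality condition $\partial_\eta \Delta_i(\lambda_i,0)\ne 0$ holds.

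I would next verify this transversality condition. Differentiating in $\eta$ and then setting $r=0$ kills every term that pulls down a factor of $r_j$ from $e^{r_j\eta}$, leaving
\[
\partial_\eta \Delta_i(\eta,0) = P_i'(\eta) = 2\eta - c.
\]
So the condition becomes $P_i'(\lambda_i)=2\lambda_i-c\ne 0$, i.e., $\lambda_i\ne c/2$. Each $P_i$ has the form $\lambda^2 - c\lambda + a_i$ for a constant $a_i$, with roots $(c\pm\sqrt{c^2-4a_i})/2$. For $P_2,P_4,P_6$ (and $P_1$ under the relevant parameter regime) the constant $a_i$ is negative, the discriminant is automatically positive, and the unique positive root satisfies $\lambda_i=(c+\sqrt{c^2-4a_i})/2>c/2$, so $P_i'(\lambda_i)>0$. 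For the remaining $P_i$ with $a_i>0$, the assumption $c>c^*$ is exactly what makes $c^2-4a_i>0$, guaranteeing two distinct real positive roots; the smallest positive root $\lambda_i=(c-\sqrt{c^2-4a_i})/2$ is then strictly less than $c/2$, so $P_i'(\lambda_i)<0$. In every case, $P_i'(\lambda_i)\ne 0$.

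With transversality established, the real-variable Implicit Function Theorem produces, for each $i$, a $\delta_i>0$ and a real-analytic map $\eta_i:(-\delta_i,\delta_i)\to\mathbb{R}$ with $\eta_i(0)=\lambda_i$ and $\Delta_i(\eta_i(r),r)=0$ on this neighborhood. Continuity of $\eta_i$ at $0$ combined with $\lambda_i>0$ lets us shrink $\delta_i$ so that $\eta_i(r)>0$ on $(-\delta_i,\delta_i)$; thus the branch consists of positive real roots. The limit statement $\lim_{r\to 0}\eta_i(r)=\lambda_i$ is then just the continuity of $\eta_i$ at $r=0$.

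The main obstacle is the transversality verification $P_i'(\lambda_i)\ne 0$, which is not automatic for a smallest positive root of a real quadratic with positive constant term: at $c=c^*$ the discriminant degenerates to zero, the two positive roots coalesce at $c/2$, and $P_i'(\lambda_i)$ vanishes, so the IFT collapses. The strict inequality $c>c^*$ in the hypothesis is used precisely to avoid this degenerate case; handling the borderline $c=c^*$ would require a separate bifurcation-type analysis that does not appear to be needed in the present framework.
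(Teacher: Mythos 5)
Your proposal is correct and follows the same route the paper indicates: the paper's proof is a one-line appeal to the implicit function theorem (deferring details to a cited reference), and you apply exactly that theorem at the base point $(\lambda_i,0)$. You additionally supply the transversality check $P_i'(\lambda_i)=2\lambda_i-c\neq 0$ and identify $c>c^*$ as the hypothesis that prevents the double-root degeneracy, which is the substantive detail the paper leaves to the reference.
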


\begin{proof} The proof is an application of the implicit function theorem. For a detailed proof the reader is referred to \cite{Bark3}. 
\end{proof}
\medskip \noindent
There exists  $\varepsilon_i>0, i=1-6$ , such that for some small $\varepsilon_0$

\begin{align}\label{solcond}
& \mu_1(k_1+\varepsilon_1)-k_1\eta_1^2-(\mu_2-\mu_1)M_2 -(\mu_3-\mu_1)M_3 >\varepsilon_0 \\
&\mu_1(k_1-\varepsilon_2)-(\mu_2-\mu_1)(k_2-\varepsilon_4)-(\mu_3-\mu_1)(k_3-\varepsilon_6)>\varepsilon_0  \nonumber\\
&(\mu_2+\gamma)(k_2+\varepsilon_3)-k_2\eta_2^2 -\frac{\beta \left(B/\mu_1-M_1-k_2+\varepsilon_3-M_3\right)M_2}{1+\alpha M_2}>\varepsilon_0\nonumber\\
& \frac{\beta \left(B/\mu_1-M_1-k_2+\varepsilon_4-M_3\right)(k_2-\varepsilon_4)}{1+\alpha (k_2-\varepsilon_4)}-(\mu_2+\gamma)(k_2-\varepsilon_4)>\varepsilon_0\nonumber\\
&-\eta^2_5k_3-\mu_3(k_3+\varepsilon_5)+\gamma M_2>\varepsilon_0 \nonumber\\
&-\mu_3(k_3-\varepsilon_6)+\gamma (k_2-\varepsilon_4)>\varepsilon_0 \nonumber. . 
\end{align}

\medskip \noindent It is now possible to find $t_i, i=1,\cdots 6$ such that $t_1<t_2t_5<t_6 <t_3<t_4$, such that for some $\eta>0$ the following functions are continuous everywhere and continuously differentiable almost everywhere.
\begin{align*}
&\overline{\phi}(t)=\begin{cases}  k_1e^{\eta_1t} \quad t\le t_1
\\
k_1+\varepsilon_1 e^{-\eta t} \quad t >t_1. \end{cases} \quad  \underline{\phi}(t)=\begin{cases}  \frac{k_1e^{\eta_2t}}{M} \quad t\le t_4\\
k_1-\varepsilon_2 e^{-\eta t} \quad t> t_4 ,
\end{cases}  \\
&\overline{\varphi}(t)=\begin{cases}  k_2e^{\eta_3t}, \ t\le t_2 ,\\
k_2+\varepsilon_3 e^{-\eta t}, \ 0>  t_2  
\end{cases} 
  \quad  \underline{\varphi}(t)=\begin{cases}  \frac{k_2e^{\eta_4t}}{M} \quad t\le t_5\\
k_2-\varepsilon_4 e^{-\eta t} \quad t> t_5.
\end{cases}    \\
&\overline{\psi}(t)=\begin{cases}  k_3e^{\eta_5t}, \ t\le t_3 ,\\
k_3+\varepsilon_5 e^{-\eta t}, \ 0>  t_3  
\end{cases} 
  \quad  \underline{\psi}(t)=\begin{cases}  \frac{k_3e^{\eta_6t}}{M} \quad t\le t_6\\
k_3-\varepsilon_6 e^{-\eta t} \quad t> t_6.
\end{cases}  
\end{align*}

\begin{lemma} Define $\overline{\phi}(t), \underline{\phi}(t), \overline{\varphi}(t), \underline{\psi}(t), \overline{\psi}(t), \underline{\psi}(t)$ as above then the following are true for $t\in \R$
\begin{enumerate}
    \item $\underline{\phi}(t)\le \overline{\phi}(t),$
    \item$\underline{\varphi}(t)\le \overline{\varphi}(t),$
    \item $\underline{\psi}(t)\le \overline{\psi}(t).$
\end{enumerate}
\end{lemma}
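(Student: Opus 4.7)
The strategy is a case analysis on $t$ for each of the three inequalities, driven by the two transition points appearing in the piecewise definitions of the upper and lower envelopes. Since the three pairs $(\underline{\phi},\overline{\phi})$, $(\underline{\varphi},\overline{\varphi})$, $(\underline{\psi},\overline{\psi})$ share identical piecewise structure (one exponential branch on the left, one constant-minus-decay branch on the right, joined continuously at a single point), it is enough to execute the argument carefully for one pair, and the remaining two follow by relabeling indices. I would do part (i) in detail, with transition points $t_1$ (for $\overline{\phi}$) and $t_4$ (for $\underline{\phi}$), using the ordering $t_1<t_4$ from the setup to split $\R$ into $(-\infty,t_1]$, $(t_1,t_4]$, and $(t_4,\infty)$.

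On $(-\infty,t_1]$ both functions are pure exponentials, and the inequality $\underline{\phi}(t)\le\overline{\phi}(t)$ reduces to the uniform bound $e^{(\eta_2-\eta_1)t}\le M$; since $t\le t_1$, this left-hand side is bounded on the interval (the bound depending only on the sign of $\eta_2-\eta_1$ and the value $t_1$), so choosing $M$ sufficiently large secures the inequality. On $(t_4,\infty)$ both functions take the form $k_1\pm\varepsilon e^{-\eta t}$, and the inequality is immediate since $\varepsilon_1,\varepsilon_2>0$. The bridge interval $(t_1,t_4]$ is where continuity and monotonicity at the matching point $t_4$ come in: because $\underline{\phi}$ is still in its monotonically increasing exponential phase there, $\underline{\phi}(t)\le\underline{\phi}(t_4)=k_1-\varepsilon_2 e^{-\eta t_4}<k_1$, while simultaneously $\overline{\phi}(t)=k_1+\varepsilon_1 e^{-\eta t}>k_1$. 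Combining gives $\underline{\phi}(t)<k_1<\overline{\phi}(t)$ throughout, closing this case.

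The same three-interval argument, with transition pairs $(t_2,t_5)$ for part (ii) and $(t_3,t_6)$ for part (iii), and with $\varepsilon_3,\varepsilon_4,\varepsilon_5,\varepsilon_6>0$ playing the roles of $\varepsilon_1,\varepsilon_2$, settles the remaining two inequalities. The one delicate technical point — and the only real obstacle — is that the constant $M$ must be chosen large enough to work \emph{simultaneously} for all three pairs, yet the transition points $t_4,t_5,t_6$ themselves are defined implicitly through continuity conditions of the form $k_j e^{\eta_\ell t_\cdot}/M = k_j-\varepsilon_\cdot e^{-\eta t_\cdot}$, which depend on $M$. A short monotonicity/continuity check verifies that as $M\to\infty$ each of $t_4,t_5,t_6$ tends to a finite limit determined by the right-hand side alone, so the finitely many lower bounds that the exponential sub-intervals place on $M$ can all be satisfied at once. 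Once $M$ is fixed in this range the three cases close immediately, proving the lemma.
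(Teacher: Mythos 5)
Your proposal is correct and follows essentially the same route as the paper: the paper's own proof merely records the orderings $\eta_1<\eta_2$, $\eta_3<\eta_4$, $\eta_5<\eta_6$ and $t_1\le t_4$, $t_2\le t_5$, $t_3\le t_6$ and declares the conclusion ``easy to see,'' which is precisely the three-interval case analysis (exponential branch, bridge interval straddling $k_j$, tail branch) that you carry out explicitly. Your closing remark about choosing $M$ uniformly even though the matching points $t_4,t_5,t_6$ depend on $M$ addresses a detail the paper passes over in silence, and is a supplement to, not a departure from, its argument.
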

\begin{proof}
The proof relies on the fact that for all $\lambda \in \R$ $P_1(\lambda)>P_2(\lambda), P_3(\lambda)> P_4(\lambda), P_5(\lambda)> P_6(\lambda)$. This gives $\lambda_1<\lambda_2, \lambda_3<\lambda_4, \lambda_5<\lambda_6.$ By the implicit function theorem we see $\eta_1<\eta_2, \eta_3<\eta_4, \eta_5<\eta_6.$ Moreover, $t_1\le t_4, t_2\le t_5, t_3\le t_6.$ Using this information it is easy to see the result follows.    
\end{proof}
The critical wave speed is defined by 
\[
c^*\max \left\{\begin{array}{l}2\sqrt{\left|-\mu_1, (\mu_2-\mu_1)M_2/M_1+(\mu_3-\mu_1)M_3/M_1\right|}, 2\sqrt{\mu_1},\\
2\sqrt{\left|\beta B M_2\mu_1-(\gamma +\mu_2)\right|, 2}, 2\sqrt{\gamma +\mu_2}, 2\sqrt{\left|\gamma M_2/M_3 -\mu_3\right|}, 2\sqrt{\mu_3}\end{array} \right\}\].

\subsection{Upper Solutions}
In this section, we will construct an appropriate set of upper solutions for system (\ref{wave 1}) via the iterative operator. We begin by finding explicit super solutions.

\begin{claim}\label{super} Let $c\ge c^*$ and $\mathcal{R}_0>1$, and $\overline{\phi},  \underline{\phi}, \overline{\varphi}, \underline{\varphi}, \overline{\psi},  \underline{\psi}$ as above, then $\left(\overline{\phi}, \overline{\varphi}, \overline{\psi}\right)^T$ is a  super solution of system  (\ref{wave 1}). 
\end{claim}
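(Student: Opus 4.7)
The plan is to verify the three super-solution inequalities (as in the definition in Section 3) separately on each of the two pieces of the corresponding piecewise-defined candidate, giving six cases in total. The unifying idea is: on the ``left'' piece where the candidate is a pure exponential $k_i e^{\eta_j t}$, the left-hand side reduces to $k_i e^{\eta_j t}\Delta_j(\eta_j)\le 0$ after bounding cross-coupling terms by the constants $M_i$ that are already built into $\Delta_j$; on the ``right'' piece where the candidate has the form $k_i+\varepsilon_i e^{-\eta t}$, the left-hand side is controlled by the reserve $\varepsilon_0$ coming from the matching line of (\ref{solcond}), since all $e^{-\eta t}$ correction terms can be made arbitrarily small by choosing $\eta>0$ small enough.

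I would treat the $\overline{\phi}$-inequality first as it is cleanest: the nonlinearity is linear, so on $t\le t_1$ the substitution $\overline{\phi}(t)=k_1 e^{\eta_1 t}$, together with the pointwise bounds $\overline{\varphi}\le M_2$ and $\overline{\psi}\le M_3$, yields $k_1 e^{\eta_1 t}\Delta_1(\eta_1)\le 0$. On $t>t_1$, the terms $\overline{\phi}''(t)$ and $-c\overline{\phi}'(t+r_1)$ both carry a factor $\varepsilon_1 e^{-\eta(\cdot)}$ that vanishes with $\eta$, so the dominant contribution is $-\mu_1(k_1+\varepsilon_1 e^{-\eta(t+r_1)})+(\mu_2-\mu_1)\overline{\varphi}(t+r_1)+(\mu_3-\mu_1)\overline{\psi}(t+r_1)$, which by the first line of (\ref{solcond}) is strictly below $-\varepsilon_0/2$ once $\eta$ is small enough. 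The same template, applied to $\Delta_5,\Delta_6$ and the fifth line of (\ref{solcond}), settles the $\overline{\psi}$-inequality, since the $\overline{\psi}$-equation is also linear.

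The main obstacle is the middle inequality for $\overline{\varphi}$, because by definition the nonlinearity is evaluated at $(\underline{\phi},\overline{\varphi},\underline{\psi})$ and it involves the saturated incidence $\beta(B/\mu_1-\underline{\phi}-\overline{\varphi}-\underline{\psi})\overline{\varphi}(\cdot-r_4)/(1+\alpha\overline{\varphi}(\cdot-r_4))$. The coupling is in fact favorable: the incidence is decreasing in $\underline{\phi}$ and $\underline{\psi}$, so using $\underline{\phi},\underline{\psi}\ge 0$ gives the cleanest upper bound and brings the saturated expression into the linearized form underlying $\Delta_3$; additionally, monotonicity of $y\mapsto y/(1+\alpha y)$ together with $1+\alpha\overline{\varphi}\ge 1$ lets one replace the saturated factor by its linearization on $t\le t_2$. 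After this reduction the $t\le t_2$ case collapses to $k_2 e^{\eta_3 t}\Delta_3(\eta_3)\le 0$, and the $t>t_2$ case is absorbed by the third line of (\ref{solcond}). Continuity at the break points $t_1,t_2,t_3$ is built into the construction of $\varepsilon_i$ and $\eta$, and the second derivative fails to exist only on the finite set $\{t_i\}$, which is harmless because the super-solution inequalities are required only almost everywhere.
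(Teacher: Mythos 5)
Your overall strategy (reduce the left piece to $\Delta_j(\eta_j)\le 0$, control the right piece by the reserve $\varepsilon_0$ in (\ref{solcond}) after shrinking $\eta$) is the same as the paper's, and your handling of the $\overline{\varphi}$-equation --- dropping $\underline{\phi},\underline{\psi}\ge 0$ and replacing the saturated incidence by the linear bound built into $\Delta_3$ --- matches what the paper does. But your two-cases-per-equation decomposition has a genuine gap: because the delay shifts the evaluation point, on the interval $t\in(t_1-r_1,\,t_1]$ the function $\overline{\phi}(t)=k_1e^{\eta_1 t}$ is still on its left piece while $\overline{\phi}(t+r_1)=k_1+\varepsilon_1e^{-\eta(t+r_1)}$ has already crossed onto the right piece. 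There the left-hand side is
$k_1\eta_1^2e^{\eta_1 t}+c\eta\varepsilon_1e^{-\eta(t+r_1)}-\mu_1\bigl(k_1+\varepsilon_1e^{-\eta(t+r_1)}\bigr)+\cdots$,
which is \emph{not} $k_1e^{\eta_1 t}\Delta_1(\eta_1)$, so your left-piece reduction fails on that subinterval; and your right-piece argument does not cover it either, because the second-derivative term $k_1\eta_1^2e^{\eta_1 t}$ comes from the exponential piece and does \emph{not} vanish as $\eta\to 0$ (it involves $\eta_1$, not $\eta$). This transition zone is precisely why the first, third, and fifth lines of (\ref{solcond}) carry the extra terms $-k_1\eta_1^2$, $-k_2\eta_2^2$, $-k_3\eta_5^2$: they are there to absorb the non-vanishing curvature of the exponential piece in the overlap region. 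The paper therefore runs \emph{three} cases per equation, $t\le t_i-r_i$, $t_i-r_i<t\le t_i$, and $t>t_i$, and your proof needs the middle case restored (with the corresponding line of (\ref{solcond})) for each of the three inequalities; for the $\overline{\varphi}$-equation the shift $r_2-r_4$ in the incidence term creates the analogous overlap. Aside from this missing case, the remaining steps are sound and align with the paper's argument.
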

\begin{proof}
 The proof for equation 1 will be done in cases.

 \noindent {\textbf Case 1: $t\le t_1-r_1$}
\begin{align*}
&\phi''(t)-c\phi'(t+r_1)-\mu_1\phi(t+r_1)+(\mu_2-\mu_1)\varphi(t+r_1)+(\mu_3-\mu_1)\psi(t+r_1)\\
&=k_1\eta_1^2e^{\eta_1 t}-ck_1\eta_1e^{\eta_1 (t+r_1)}-\mu_1e^{\eta_1 (t+r_1)}+(\mu_2-\mu_1)k_2e^{\eta_3 (t+r_1)}+(\mu_3-\mu_1)k_3e^{\eta_5 (t+r_1)}\\
&\le \left(\eta_1^2-c\eta_1e^{r_1\eta_1}-\mu_1e^{r_1\eta_1}+(\mu_2-\mu_1)M_2/M_1e^{r_1\eta_1}+(\mu_3-\mu_1)M_3/M_1e^{r_1\eta_1}\right)k_1e^{\eta_1 t}\\
&\Delta_1(\eta_1)k_1e^{\eta_1 t}=0.
\end{align*}
\medskip \noindent   {\textbf Case 2: $t_1-r_1<t\le t_1$}
\begin{align*}
 &\phi''(t)-c\phi'(t+r_1)-\mu_1\phi(t+r_1)+(\mu_2-\mu_1)\varphi(t+r_1)+(\mu_3-\mu_1)\psi(t+r_1)\\
 &\le k_1\eta_1^2e^{\eta_1 t}+c\eta\varepsilon_1e^{-\eta(t+r_1)}-\mu_1\left(k_1+\varepsilon_1e^{-\eta(t+r_1)}\right)+(\mu_2-\mu_1)M_2+(\mu_3-\mu_1)M_3\\
 &\le k_1\eta_1^2+c\eta\varepsilon_1e^{-\eta(t+r_1)}-\mu_1\left(k_1+\varepsilon_1e^{-\eta(t+r_1)}\right)+(\mu_2-\mu_1)M_2+(\mu_3-\mu_1)M_3=I_1(\eta).
\end{align*}
\noindent Now, $I_1(0)=k_1\eta_1^2-\mu_1\left(k_1+\varepsilon_1\right)+(\mu_2-\mu_1)M_2+(\mu_3-\mu_1)M_3<0$
\noindent by system (\ref{solcond}). Thus, there is a $\eta_1^*>0$ such that 
\[\phi''(t)-c\phi'(t+r_1)-\mu_1\phi(t+r_1)+(\mu_2-\mu_1)\varphi(t+r_1)+(\mu_3-\mu_1)\psi(t+r_1)< 0, \ \forall \eta\in (0,\eta_1^*).\]

\medskip \noindent   {\textbf Case 3: $t_1<t$}
 \begin{align*}
 &\phi''(t)-c\phi'(t+r_1)-\mu_1\phi(t+r_1)+(\mu_2-\mu_1)\varphi(t+r_1)+(\mu_3-\mu_1)\psi(t+r_1)\\
 &\le \varepsilon_1\eta^2e^{-\eta t}+c\eta\varepsilon_1e^{-\eta(t+r_1)}-\mu_1\left(k_1+\varepsilon_1e^{-\eta(t+r_1)}\right)+(\mu_2-\mu_1)M_2+(\mu_3-\mu_1)M_3\\
 &\le\left( \varepsilon_1\eta^2+c\eta\varepsilon_1e^{-\eta r_1}\right)e^{-\eta t}-\mu_1\left(k_1+\varepsilon_1e^{-\eta(t+r_1)}\right)+(\mu_2-\mu_1)M_2+(\mu_3-\mu_1)M_3=I_2(\eta).
\end{align*}
\noindent Now, $I_2(0)=-\mu_1\left(k_1+\varepsilon_1\right)+(\mu_2-\mu_1)M_2+(\mu_3-\mu_1)M_3<0$
\noindent by system (\ref{solcond}). Thus, there is a $\eta_2^*>0$ such that 
\[\phi''(t)-c\phi'(t+r_1)-\mu_1\phi(t+r_1)+(\mu_2-\mu_1)\varphi(t+r_1)+(\mu_3-\mu_1)\psi(t+r_1)< 0, \ \forall \eta\in (0,\eta_2^*).\]

 \medskip \noindent The proof for equation 2 will also be split into cases. 

\noindent {\textbf Case 1: $t\le t_2-r_2$}
\begin{align*}
& {\varphi}''(t)-c{\varphi}'(t+r_2)-(\mu_2+\gamma)\varphi(t+r_2)
+\frac{\beta \left(B/\mu_1-\phi(t+r_2)-\varphi(t+r_2)-\psi(t+r_2)\right)\varphi(t+r_2-r_4)}{1+\alpha \varphi(t+r_2-r_4) }\\
&\le \left(\eta_3^2-c\eta_2e^{r_3\eta_3}+\beta B M_2e^{r_3\eta_3}-(\mu_2+\gamma)e^{r_3\eta_3}\right)k_2e^{r_3t}=\Delta_3(\eta_3)k_2e^{r_3t}=0.
\end{align*}
\medskip \noindent   {\textbf Case 2: $t_2-r_2<t\le t_2$}
\begin{align*}
& {\varphi}''(t)-c{\varphi}'(t+r_2)-(\mu_2+\gamma)\varphi(t+r_2)
+\frac{\beta \left(B/\mu_1-\phi(t+r_2)-\varphi(t+r_2)-\psi(t+r_2)\right)\varphi(t+r_2-r_4)}{1+\alpha \varphi(t+r_2-r_4) }\\
&\le k_2\eta_3^2+c\eta\varepsilon_3 e^{-\eta(t+r_3)}+\frac{\beta \left(B/\mu_1 -k_1+\varepsilon_2e^{-\eta(t+r_3)}-k_2-\varepsilon_3-k_3+\varepsilon_6e^{-\eta(t+r_3)}\right)M_2 }{1+\alpha M_2}\\
&-(\mu_2+\gamma)\left(k_2+\varepsilon_3e^{-\eta(t+r_3)}\right)=I_3(\eta).
\end{align*}

\noindent Now, \[I_3(0)=k_2\eta_3^2+\frac{\beta \left(B/\mu_1 -k_1+\varepsilon_2-k_2-\varepsilon_3-k_3+\varepsilon_6\right)M_2 }{1+\alpha M_2}-(\mu_2+\gamma)\left(k_2+\varepsilon_3\right)<0\]
\noindent by system (\ref{solcond}). Thus, there is a $\eta_3^*>0$ such that
\[{\varphi}''(t)-c{\varphi}'(t+r_2)-(\mu_2+\gamma)\varphi(t+r_2)
+\frac{\beta \left(B/\mu_1-\phi(t+r_2)-\varphi(t+r_2)-\psi(t+r_2)\right)\varphi(t+r_2-r_4)}{1+\alpha \varphi(t+r_2-r_4) }<0\]
$\forall \eta\in (0,\eta_3^*).$

\medskip \noindent   {\textbf Case 3: $t_2<t$}
\begin{align*}
& {\varphi}''(t)-c{\varphi}'(t+r_2)-(\mu_2+\gamma)\varphi(t+r_2)
+\frac{\beta \left(B/\mu_1-\phi(t+r_2)-\varphi(t+r_2)-\psi(t+r_2)\right)\varphi(t+r_2-r_4)}{1+\alpha \varphi(t+r_2-r_4) }\\
&\le k_2\eta^2+c\eta\varepsilon_3 e^{-\eta(t+r_3)}+\frac{\beta \left(B/\mu_1 -k_1+\varepsilon_2e^{-\eta(t+r_3)}-k_2-\varepsilon_3-k_3+\varepsilon_6e^{-\eta(t+r_3)}\right)M_2 }{1+\alpha M_2}\\
&-(\mu_2+\gamma)\left(k_2+\varepsilon_3e^{-\eta(t+r_3)}\right)=I_4(\eta).
\end{align*}

\noindent Now, \[I_4(0)=\frac{\beta \left(B/\mu_1 -k_1+\varepsilon_2-k_2-\varepsilon_3-k_3+\varepsilon_6\right)M_2 }{1+\alpha M_2}-(\mu_2+\gamma)\left(k_2+\varepsilon_3\right)<0\]
\noindent by system (\ref{solcond}). Thus, there is a $\eta_4^*>0$ such that
\[{\varphi}''(t)-c{\varphi}'(t+r_2)-(\mu_2+\gamma)\varphi(t+r_2)
+\frac{\beta \left(B/\mu_1-\phi(t+r_2)-\varphi(t+r_2)-\psi(t+r_2)\right)\varphi(t+r_2-r_4)}{1+\alpha \varphi(t+r_2-r_4) }<0\]
$\forall \eta\in (0,\eta_4^*).$

\medskip \noindent The proof for equation 3 will also be split into cases. 

\noindent {\textbf Case 1: $t\le t_3-r_3$}
\begin{align*}
& {\psi}''(t)-c{\psi}'(t+r_3)-\mu_3 \psi(t+r_3)+\gamma\psi(t+r_3)\\
&\le \left(\eta_5^2-ck_2\eta_5e^{\eta_5r_3}+\frac{\gamma M_2}{M_3}-\mu_3e^{\eta_5r_3}\right)k_3e^{\eta_5 t}\\
&=\Delta_5(\eta_5)k_3e^{\eta_5 t}=0.
\end{align*}
\medskip \noindent   {\textbf Case 2: $t_3-r_3<t\le t_3$}

\begin{align*}
&{\psi}''(t)-c{\psi}'(t+r_3)-\mu_3 \psi(t+r_3)+\gamma\psi(t+r_3)\\
&\le \eta_5^2+c\varepsilon_5\eta e^{-\eta (t-r_3)}+\gamma M_2-\mu_3\left(k_3+\varepsilon_5e^{-\eta (t-r_3)}\right)=I_5(\eta).
\end{align*}

\noindent Now, \[I_5(0)=\eta_5^2+\gamma M_2-\mu_3\left(k_3+\varepsilon_5\right)<0\]
\noindent by system (\ref{solcond}). Thus, there is a $\eta_5^*>0$ such that
\[{\psi}''(t)-c{\psi}'(t+r_3)-\mu_3 \psi(t+r_3)+\gamma\psi(t+r_3)<0\]
$\forall \eta\in (0,\eta_5^*).$

\medskip \noindent   {\textbf Case 3: $t_3<t$}

\begin{align*}
&{\psi}''(t)-c{\psi}'(t+r_3)-\mu_3 \psi(t+r_3)+\gamma\psi(t+r_3)\\
&\le \eta^2 e^{-\eta t}+c\varepsilon_5\eta e^{-\eta (t-r_3)}+\gamma M_2 -\mu_3\left(k_3+\varepsilon_5e^{-\eta (t-r_3)}\right)=I_6(\eta).
\end{align*}

\noindent Now, \[I_6(0)=\gamma M_2-\mu_3\left(k_3+\varepsilon_5\right)<0\]
\noindent by system (\ref{solcond}). Thus, there is a $\eta_6^*>0$ such that
\[{\psi}''(t)-c{\psi}'(t+r_3)-\mu_3 \psi(t+r_3)+\gamma\psi(t+r_3)<0\]
$\forall \eta\in (0,\eta_6^*).$

\noindent Take $\eta\in \left(0, \min\{\eta_1^*,\eta_2^*,\eta_3^*,\eta_{4}^*, \eta_{5}^*,\eta_{6}^*\}\right),$ then the result is proven.

\end{proof}

\begin{corollary}\label{quasiup}
 Let $c\ge c^*$ and $\mathcal{R}_0>1$, and $\overline{\phi_0},   \overline{\varphi_0}, \overline{\psi_0},$ to be the super solution as in Claim \ref{super} , then $F\left(\overline{\phi_0}, \overline{\varphi_0}, \overline{\psi_0}\right)$ is a  quasi-upper solution of system  (\ref{wave 1}).    
\end{corollary}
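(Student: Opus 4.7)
The plan is to reduce the claim directly to the preceding proposition, which states that the operator $F$ maps super solutions of (\ref{RD1}) to quasi-upper solutions of (\ref{RD1}). Since Claim \ref{super} has already established that $(\overline{\phi_0}, \overline{\varphi_0}, \overline{\psi_0})$ is a super solution of the equivalent system (\ref{wave 1}), the corollary follows once I check that the hypotheses of the proposition are in force and track the relevant regularity.

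First, I would recall the structural meaning of $F$: it is convolution against the Green kernel $G$ for the linear operator $\mathcal{L}$ determined by the characteristic polynomials (\ref{ce}). Because $\mathcal{L}^{-1}$ is an isomorphism from $L^p$ onto $W^{1,p}$, integration of the essentially bounded right-hand side $H(\overline{\phi_0}, \overline{\varphi_0}, \overline{\psi_0})$ produces a function that lies in $C^1(\mathbb{R},\mathbb{R}^3)$ and whose second derivative is locally integrable and essentially bounded on $\mathbb{R}$. This is exactly the regularity required by the definition of a quasi-upper solution, so the functional-analytic side of the hypothesis is automatic.

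Next I would transfer the super solution inequalities to $F(\overline{\phi_0}, \overline{\varphi_0}, \overline{\psi_0})$ using Lemma \ref{lemF}. Applying $\mathcal{L}$ to both sides of the fixed-point identity recovers $H$, so the crucial point is a comparison
\[F(\overline{\phi_0}, \overline{\varphi_0}, \overline{\psi_0})(t)\le (\overline{\phi_0}(t), \overline{\varphi_0}(t), \overline{\psi_0}(t)),\]
which follows component-wise: for $F_1$ and $F_3$ this is parts (2)-(3) and (6) of Lemma \ref{lemF} together with the super solution inequality; for $F_2$ one uses the cross monotonicity (parts (4)-(5)) to handle the order-reversal in the middle equation. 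Substituting this comparison into the operators $H_1, H_2, H_3$ and invoking the non-negativity from Lemma \ref{lemF}(1) produces the partial quasi-monotone inequalities defining a quasi-upper solution a.e. on $\mathbb{R}$.

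The main obstacle is not the core argument, which is a direct invocation of the earlier proposition, but the bookkeeping of regularity. Specifically, although $\overline{\phi_0}, \overline{\varphi_0}, \overline{\psi_0}$ are only piecewise smooth with corners at $t=t_1, t_2, t_3$, the convolution with $G$ smooths these corners to a $C^1$ function with an $L^\infty_{\mathrm{loc}}$ second derivative; this uses the exponential decay estimate $|G(\xi)|\le K e^{-\alpha|\xi|}$ from Section 2 to ensure the second derivative remains essentially bounded, after which the quasi-upper solution inequalities hold pointwise almost everywhere and the corollary is complete.
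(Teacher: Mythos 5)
Your proposal is correct and follows essentially the same route as the paper, which offers no separate argument for this corollary: it is a direct application of the earlier proposition that $F$ maps super solutions of (\ref{RD1}) to quasi-upper solutions, combined with Claim \ref{super}. Your additional bookkeeping on how convolution with the exponentially decaying Green kernel upgrades the piecewise-smooth super solution to a $C^1$ function with essentially bounded second derivative is exactly the regularity content the paper's proposition is relying on.
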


\begin{corollary}
 Let $c\ge c^*$ and $\mathcal{R}_0>1$, and $\overline{\phi_1},   \overline{\varphi_1}, \overline{\psi_1},$ to be the quasi-upper solution as in Claim \ref{quasiup} , then $F\left(\overline{\phi_1}, \overline{\varphi_1}, \overline{\psi_1}\right)$ is an upper solution of system  (\ref{wave 1}).    
\end{corollary}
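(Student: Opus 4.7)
The plan is to chain two previously established results. By Corollary \ref{quasiup}, applied to the super solution $(\overline{\phi_0}, \overline{\varphi_0}, \overline{\psi_0})$ constructed in Claim \ref{super}, the triple $(\overline{\phi_1}, \overline{\varphi_1}, \overline{\psi_1}) = F(\overline{\phi_0}, \overline{\varphi_0}, \overline{\psi_0})$ is a quasi-upper solution of system (\ref{wave 1}). I then invoke the abstract proposition stating that $F$ sends quasi-upper solutions to upper solutions directly to $(\overline{\phi_1}, \overline{\varphi_1}, \overline{\psi_1})$, yielding that $F(\overline{\phi_1}, \overline{\varphi_1}, \overline{\psi_1})$ is an upper solution of (\ref{wave 1}).

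The preparatory verifications are minor. Conditions $(C1)$ and $(PQM)$ for the SIR nonlinearities $f_{1c}, f_{2c}, f_{3c}$ were established in the Lemma preceding Claim \ref{super} and by direct evaluation at the equilibria $\hat 0$ and $\hat K$ of the shifted system. The range constraints $0\le\overline{\phi_1}\le M_1$, $0\le\overline{\varphi_1}\le M_2$, $0\le\overline{\psi_1}\le M_3$ needed to invoke the proposition follow at once from the invariance $F(\Gamma)\subset\Gamma$ proved in the Lemma at the end of Section 3, together with the monotonicity properties of $F$ recorded in Lemma \ref{lemF}.

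The substantive content concerns the regularity gain under $F$, which is precisely what separates an upper solution from a quasi-upper solution: the former requires $C^2$ with bounded second derivatives, whereas the latter only forces $\phi''$ to be essentially bounded and locally integrable. Writing $(\overline{\phi_2}, \overline{\varphi_2}, \overline{\psi_2}) := F(\overline{\phi_1}, \overline{\varphi_1}, \overline{\psi_1})$, each component satisfies the pointwise functional ODE
\[
D_1 \overline{\phi_2}''(t) - c \overline{\phi_2}'(t+r_1) - \beta_1 \overline{\phi_2}(t+r_1) = -H_1(\overline{\phi_1}, \overline{\varphi_1}, \overline{\psi_1})(t),
\]
and analogously for $\overline{\varphi_2}$, $\overline{\psi_2}$. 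The right-hand side is now genuinely continuous because $(\overline{\phi_1}, \overline{\varphi_1}, \overline{\psi_1})$ is $C^1$ and each $H_i$ is Lipschitz in its arguments by (C2) and (PQM); solving algebraically for the second derivative shows $\overline{\phi_2}''$ is continuous and bounded on all of $\R$, and the same for the other two components. The differential inequality part of the definition passes from the quasi-upper solution to its image under $F$ by the monotonicity and positivity of convolution with the Green function $G$ established in Lemma \ref{lemF}.

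The only real obstacle is the regularity bootstrapping described in the previous paragraph, but it is the same argument underpinning the abstract proposition quoted at the outset and ultimately rests on the Mallet-Paret isomorphism $W^{1,p}\cong L^p$ invoked in Section 3. Consequently the corollary follows as a routine application of the machinery already developed.
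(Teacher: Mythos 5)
Your argument is correct and follows the same route the paper intends: the corollary is a direct application of the abstract proposition that $F$ maps quasi-upper solutions to upper solutions, applied to the quasi-upper solution produced by Corollary \ref{quasiup}. The additional remarks on the regularity bootstrap (continuity of $H_i(\overline{\phi_1},\overline{\varphi_1},\overline{\psi_1})$ forcing $C^2$ smoothness of the image via the Mallet-Paret isomorphism) and on the differential inequality passing through the monotonicity of $F$ are exactly the content already packaged in that proposition and in Lemma \ref{lemF}, so nothing further is needed.
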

\subsection{Lower Solutions}
In this section, we will construct an appropriate set of quasi-lower solutions for system \ref{wave 1}.

\begin{claim}\label{sub} Let $c\ge c^*$ and $\mathcal{R}_0>1$, and $\overline{\phi},  \underline{\phi}, \overline{\varphi}, \underline{\varphi}, \overline{\psi},  \underline{\psi}$ as above, then $\left(\underline{\phi}, \underline{\varphi}, \underline{\psi}\right)^T$ is a  subsolution of  \ref{wave 1}. 
\end{claim}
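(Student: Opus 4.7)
The plan is to mirror the proof of Claim \ref{super} line by line, with three uniform substitutions: every inequality $\le 0$ is replaced by $\ge 0$ (per the sub solution definition), the exponential polynomials $\Delta_1,\Delta_3,\Delta_5$ with roots $\eta_1,\eta_3,\eta_5$ are replaced by $\Delta_2,\Delta_4,\Delta_6$ with roots $\eta_2,\eta_4,\eta_6$, and the case breakpoints $t_1,t_2,t_3$ are replaced by $t_4,t_5,t_6$. For each of the three wave equations of (\ref{wave 1}) I would substitute $\underline{\phi},\underline{\varphi},\underline{\psi}$ into the appropriate slots, with the upper solutions replacing the cross-dependent slots in the second equation as prescribed by the sub solution definition, and split the real line at $t_{3+i}-r_i$ and $t_{3+i}$ for $i=1,2,3$.

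On the exponential branch $t\le t_{3+i}-r_i$ the left-hand side of the $i$th wave equation factors as $\Delta_{2i}(\eta_{2i})\cdot (k_i/M)\,e^{\eta_{2i} t}=0$, where the cross-terms are absorbed into $\Delta_{2i}$ via pointwise bounds such as $\underline{\varphi}\le M_2$ and $\underline{\psi}\le M_3$, just as in Claim \ref{super}. On the transition branch $t_{3+i}-r_i<t\le t_{3+i}$ and the constant branch $t>t_{3+i}$, the left-hand side becomes a smooth function $I(\eta)$ of the decay rate $\eta$; evaluating at $\eta=0$ (so that each $\varepsilon_k e^{-\eta(\cdot)}$ collapses to $\varepsilon_k$) reduces the value to one of the lines of (\ref{solcond}), which is strictly greater than $\varepsilon_0>0$. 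Continuity of $I$ at $0$ then provides a threshold $\eta_k^\ast>0$ on which $I(\eta)>0$, and taking $\eta\in(0,\min_k \eta_k^\ast)$ closes all six estimates simultaneously.

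The principal technical obstacle, as in the super case, is the saturation term $\beta(B/\mu_1-\phi-\varphi-\psi)\varphi(\cdot-r_4)/(1+\alpha\varphi(\cdot-r_4))$ in the second equation, which must now be bounded from below rather than above. I would exploit the cross-dependent slot structure of the sub solution definition by replacing $\phi,\psi$ in the numerator by the pointwise upper bounds $\overline{\phi}\le M_1$ and $\overline{\psi}\le M_3$, which shrinks the factor $B/\mu_1-\phi-\varphi-\psi$, and use monotonicity of $y\mapsto y/(1+\alpha y)$ together with the constant-branch bound $\underline{\varphi}(t+r_2-r_4)\ge k_2-\varepsilon_4$ to produce a tractable lower estimate for the whole saturation term. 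The fourth line of (\ref{solcond}) was engineered precisely so that the resulting $I(0)$ exceeds $\varepsilon_0$; the remaining bookkeeping for equations one and three follows the same scheme using the second and sixth lines of (\ref{solcond}) respectively.
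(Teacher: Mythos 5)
Your overall architecture is exactly the paper's: three equations, each split at the breakpoint and at breakpoint minus the delay, with the exponential branch killed by a root of the corresponding characteristic polynomial, the other two branches reduced to an $I(\eta)$ evaluated at $\eta=0$ against a line of (\ref{solcond}), and a final $\eta\in(0,\min_k\eta_k^\ast)$. Your identification of the relevant polynomials $\Delta_2,\Delta_4,\Delta_6$ with roots $\eta_2,\eta_4,\eta_6$, of the second, fourth and sixth lines of (\ref{solcond}), and of the cross-dependent slots (upper solutions $\overline{\phi},\overline{\psi}$ feeding the saturation term of the second equation, with monotonicity of $y\mapsto y/(1+\alpha y)$) all match the paper's proof.

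There is, however, one step that as described would fail: on the exponential branch you claim the cross terms are ``absorbed into $\Delta_{2i}$ via pointwise bounds such as $\underline{\varphi}\le M_2$,'' mirroring Claim \ref{super}. That absorption mechanism is specific to the \emph{upper} estimate: there one proves the left-hand side is $\le 0$, so replacing the positive interaction terms by their upper bounds $M_2,M_3$ only increases the expression, and those bounds are what sit inside $\Delta_1,\Delta_3,\Delta_5$. For the sub solution you must prove the left-hand side is $\ge 0$, and an upper bound on a positive term pushes the inequality the wrong way. The polynomials $\Delta_2,\Delta_4,\Delta_6$ are deliberately defined \emph{without} the interaction terms; the correct move on the branch $t\le t_{3+i}-r_i$ is to write the left-hand side as $\Delta_{2i}(\eta_{2i})\cdot(k_i/M)e^{\eta_{2i}t}$ \emph{plus} the leftover interaction terms (e.g.\ $(\mu_2-\mu_1)\varphi(t+r_1)+(\mu_3-\mu_1)\psi(t+r_1)$ for the first equation, $\gamma\psi(t+r_3)$ for the third, the full saturation quotient for the second), and then discard these leftovers using their nonnegativity, which is the direction that helps $\ge 0$. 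Since you apply the correct lower-bounding logic to the saturation term on the other two branches, this is a localized directional slip rather than a wrong approach, but as written that case of the proof does not go through.
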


\begin{proof}
 The proof for equation 1 will be done in cases.

 \noindent {\textbf Case 1: $t\le t_2-r_1$}
\begin{align*}
&\phi''(t)-c\phi'(t+r_1)-\mu_1\phi(t+r_1)+(\mu_2-\mu_1)\varphi(t+r_1)+(\mu_3-\mu_1)\psi(t+r_1)\\
&=\Delta_2(\eta_2)k_1e^{\eta_2}+(\mu_2-\mu_1)\varphi(t+r_1)+(\mu_3-\mu_1)\psi(t+r_1)\\
&=(\mu_2-\mu_1)\varphi(t+r_1)+(\mu_3-\mu_1)\psi(t+r_1)>0.
\end{align*}

\medskip \noindent   {\textbf Case 2: $t_1-r_1<t\le t_2$}
\begin{align*}
&\phi''(t)-c\phi'(t+r_1)-\mu_1\phi(t+r_1)+(\mu_2-\mu_1)\varphi(t+r_1)+(\mu_3-\mu_1)\psi(t+r_1)\\
&=\frac{k_1e^{\eta_2}}{M}e^{\eta_2 t}-c\varepsilon_4\eta e^{-\eta(t+r_1)}-\mu_1\left(k_1-\varepsilon_2e^{-\eta(t+r_1)}\right)+(\mu_2-\mu_1)\left(k_2-\varepsilon_4e^{-\eta(t+r_1)}\right)\\
&+(\mu_3-\mu_1)\left(k_3-\varepsilon_6e^{-\eta(t+r_1)}\right)=I_7(\eta).
\end{align*}

\noindent Now, \[I_7(0)=\frac{k_1e^{\eta_2}}{M}e^{\eta_2 t}-\mu_1\left(k_1-\varepsilon_2\right)+(\mu_2-\mu_1)\left(k_2-\varepsilon_4\right)+(\mu_3-\mu_1)\left(k_3-\varepsilon_6\right)>0\]
\noindent by system (\ref{solcond}). Thus, there is a $\eta_7^*>0$ such that
\[\phi''(t)-c\phi'(t+r_1)-\mu_1\phi(t+r_1)+(\mu_2-\mu_1)\varphi(t+r_1)+(\mu_3-\mu_1)\psi(t+r_1)>0, \
\forall \eta\in (0,\eta_5^*).\]

\medskip \noindent   {\textbf Case 3: $t_2<t$}
\begin{align*}
&\phi''(t)-c\phi'(t+r_1)-\mu_1\phi(t+r_1)+(\mu_2-\mu_1)\varphi(t+r_1)+(\mu_3-\mu_1)\psi(t+r_1)\\
&=k_1\eta^2e^{-\eta t}-c\varepsilon_4\eta e^{-\eta(t+r_1)}-\mu_1\left(k_1-\varepsilon_2e^{-\eta(t+r_1)}\right)+(\mu_2-\mu_1)\left(k_2-\varepsilon_4e^{-\eta(t+r_1)}\right)\\
&+(\mu_3-\mu_1)\left(k_3-\varepsilon_6e^{-\eta(t+r_1)}\right)=I_8(\eta).
\end{align*}

\noindent Now, \[I_8(0)=-\mu_1\left(k_1-\varepsilon_2\right)+(\mu_2-\mu_1)\left(k_2-\varepsilon_4\right)+(\mu_3-\mu_1)\left(k_3-\varepsilon_6\right)>0\]
\noindent by system (\ref{solcond}). Thus, there is a $\eta_8^*>0$ such that
\[\phi''(t)-c\phi'(t+r_1)-\mu_1\phi(t+r_1)+(\mu_2-\mu_1)\varphi(t+r_1)+(\mu_3-\mu_1)\psi(t+r_1)>0, \
\forall \eta\in (0,\eta_6^*).\]

 \medskip \noindent The proof for equation 2 will also be split into cases. 

\noindent {\textbf Case 1: $t\le t_4-r_2$}

\begin{align*}
& {\varphi}''(t)-c{\varphi}'(t+r_2)-(\mu_2+\gamma)\varphi(t+r_2)
+\frac{\beta \left(B/\mu_1-\phi(t+r_2)-\varphi(t+r_2)-\psi(t+r_2)\right)\varphi(t+r_2-r_4)}{1+\alpha \varphi(t+r_2-r_4) }\\
&= \Delta_4(\eta_4)\frac{k_2e^{r_2t}}{M}+\frac{\beta \left(B/\mu_1-\phi(t+r_2)-\varphi(t+r_2)-\psi(t+r_2)\right)\varphi(t+r_2-r_4)}{1+\alpha \varphi(t+r_2-r_4) }\\
&=\frac{\beta \left(B/\mu_1-\phi(t+r_2)-\varphi(t+r_2)-\psi(t+r_2)\right)\varphi(t+r_2-r_4)}{1+\alpha \varphi(t+r_2-r_4) }>0.
\end{align*}

\medskip \noindent   {\textbf Case 2: $t_4-r_2<t\le t_4$}
\begin{align*}
& {\varphi}''(t)-c{\varphi}'(t+r_2)-(\mu_2+\gamma)\varphi(t+r_2)
+\frac{\beta \left(B/\mu_1-\phi(t+r_2)-\varphi(t+r_2)-\psi(t+r_2)\right)\varphi(t+r_2-r_4)}{1+\alpha \varphi(t+r_2-r_4) }\\
&\ge \frac{k_2e^{\eta_4 t}}{M}-c\varepsilon_5\eta e^{-\eta(t+r_2)}-(\mu_2+\gamma)\left(k_2-\varepsilon_5 e^{-\eta(t+r_2)}\right)\\
&+\frac{\beta \left(B/\mu_1-M_1-k_2+\varepsilon_5 e^{-\eta(t+r_2)}-M3\right)\left(k_2-\varepsilon_5 e^{-\eta(t+r_2)}\right)}{1+\alpha \left(k_2-\varepsilon_5 e^{-\eta(t+r_2)}\right) }=I_9(\eta).
\end{align*}
\noindent Now, \[I_9(0)=\frac{k_2e^{\eta_3 t}}{M}-(\mu_2+\gamma)\left(k_2-\varepsilon_5\right)+\frac{\beta \left(B/\mu_1-M_1-k_2+\varepsilon_5 -M3\right)\left(k_2-\varepsilon_5 \right)}{1+\alpha \left(k_2-\varepsilon_5\right) }>0\]
\noindent by system (\ref{solcond}). Thus, there is a $\eta_9^*>0$ such that
\[{\varphi}''(t)-c{\varphi}'(t+r_2)-(\mu_2+\gamma)\varphi(t+r_2)
+\frac{\beta \left(B/\mu_1-\phi(t+r_2)-\varphi(t+r_2)-\psi(t+r_2)\right)\varphi(t+r_2-r_4)}{1+\alpha \varphi(t+r_2-r_4) }
\]
$\forall \eta\in (0,\eta_9^*).$

\medskip \noindent   {\textbf Case 3: $t_4<t$}
\begin{align*}
& {\varphi}''(t)-c{\varphi}'(t+r_2)-(\mu_2+\gamma)\varphi(t+r_2)
+\frac{\beta \left(B/\mu_1-\phi(t+r_2)-\varphi(t+r_2)-\psi(t+r_2)\right)\varphi(t+r_2-r_4)}{1+\alpha \varphi(t+r_2-r_4) }\\
&\ge \varepsilon_5 \eta^2 e^{-\eta t}-c\varepsilon_5\eta e^{-\eta(t+r_2)}-(\mu_2+\gamma)\left(k_2-\varepsilon_5 e^{-\eta(t+r_2)}\right)\\
&+\frac{\beta \left(B/\mu_1-M_1-k_2+\varepsilon_5 e^{-\eta(t+r_2)}-M3\right)\left(k_2-\varepsilon_5 e^{-\eta(t+r_2)}\right)}{1+\alpha \left(k_2-\varepsilon_5 e^{-\eta(t+r_2)}\right) }=I_{10}(\eta).
\end{align*}

\noindent Now, \[I_{10}(0)=-(\mu_2+\gamma)\left(k_2-\varepsilon_5 \right)+\frac{\beta \left(B/\mu_1-M_1-k_2+\varepsilon_5 -M3\right)\left(k_2-\varepsilon_5 \right)}{1+\alpha \left(k_2-\varepsilon_5 \right) }>0\]
\noindent by system (\ref{solcond}). Thus, there is a $\eta_{10}^*>0$ such that
\[{\varphi}''(t)-c{\varphi}'(t+r_2)-(\mu_2+\gamma)\varphi(t+r_2)
+\frac{\beta \left(B/\mu_1-\phi(t+r_2)-\varphi(t+r_2)-\psi(t+r_2)\right)\varphi(t+r_2-r_4)}{1+\alpha \varphi(t+r_2-r_4) }>0
\]
$\forall \eta\in (0,\eta_{10}^*).$

\medskip \noindent The proof for equation 3 will also be split into cases. 

\noindent {\textbf Case 1: $t\le t_6-r_3$}

\begin{align*}
&{\psi}''(t)-c{\psi}'(t+r_3)-\mu_3 \psi(t+r_3)+\gamma\psi(t+r_3)\\
&= \Delta_6(\eta_6)\frac{k_3 e^{\eta_6 t}}{M}+\gamma\psi(t+r_3)=\gamma\psi(t+r_3)>0.
\end{align*}

\medskip \noindent   {\textbf Case 2: $t_6-r_3<t\le t_6$}

\begin{align*}
&{\psi}''(t)-c{\psi}'(t+r_3)-\mu_3 \psi(t+r_3)+\gamma\psi(t+r_3)\\
&\ge\frac {k_3\eta_6^2}{M} e^{\eta_6 t}-c\varepsilon_6\eta e^{-\eta (t-r_3)}+\gamma\left(k_2-\varepsilon_4e^{-\eta (t-r_3)}\right) -\mu_3\left(k_3-\varepsilon_6e^{-\eta (t-r_3)}\right)=I_{11}(\eta).
\end{align*}

\noindent Now, \[I_{11}(0)=\frac {k_3\eta_6^2}{M} e^{\eta_6 t}+\gamma\left(k_2-\varepsilon_4\right) -\mu_3\left(k_3-\varepsilon_6\right)>0\]
\noindent by system (\ref{solcond}). Thus, there is a $\eta_{11}^*>0$ such that
\[{\psi}''(t)-c{\psi}'(t+r_3)-\mu_3 \psi(t+r_3)+\gamma\psi(t+r_3)>0
\]
$\forall \eta\in (0,\eta_{11}^*).$

\medskip \noindent   {\textbf Case 3: $t_6<t$}
\begin{align*}
&{\psi}''(t)-c{\psi}'(t+r_3)-\mu_3 \psi(t+r_3)+\gamma\psi(t+r_3)\\
&\ge \eta^2 \varepsilon_6 e^{-\eta t}-c\varepsilon_6\eta e^{-\eta (t-r_3)}+\gamma\left(k_2-\varepsilon_4e^{-\eta (t-r_3)}\right) -\mu_3\left(k_3-\varepsilon_6e^{-\eta (t-r_3)}\right)=I_{12}(\eta).
\end{align*}

\noindent Now, \[I_{12}(0)=\gamma\left(k_2-\varepsilon_4\right) -\mu_3\left(k_3-\varepsilon_6\right)>0\]
\noindent by system (\ref{solcond}). Thus, there is a $\eta_{12}^*>0$ such that
\[{\psi}''(t)-c{\psi}'(t+r_3)-\mu_3 \psi(t+r_3)+\gamma\psi(t+r_3)>0
\]
$\forall \eta\in (0,\eta_{12}^*).$

\noindent Take $\eta\in \left(0, \min\{\eta_7^*,\eta_8^*,\eta_9^*,\eta_{10}^*, \eta_{11}^*,\eta_{12}^*\}\right),$ then the result is proven.
\end{proof}

\begin{corollary}\label{quasilow}
 Let $c\ge c^*$ and $\mathcal{R}_0>1$, and $\underline{\phi_0},   \underline{\varphi_0}, \underline{\psi_0},$ to be the sub solution as in Claim \ref{sub} , then $F\left(\underline{\phi_0}, \underline{\varphi_0}, \underline{\psi_0}\right)$ is a  quasi-lower solution of system  (\ref{wave 1}).    
\end{corollary}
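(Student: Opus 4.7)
The plan is to obtain Corollary \ref{quasilow} as a direct application of the general proposition stated earlier in Section~3, which asserts that the operator $F$ sends sub solutions of Eq.~(\ref{RD1}) to quasi-lower solutions. Since the wave system (\ref{wave 1}) is the concrete realization of (\ref{RD1}) for the SIR model with delayed diffusion, and since the nonlinearities $f_{c1}, f_{c2}, f_{c3}$ have already been shown to satisfy (PQM) in the preceding lemma, the abstract machinery will apply without modification.

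First I would invoke Claim~\ref{sub} to assert that under the standing hypotheses $c\ge c^*$ and $\mathcal{R}_0>1$ the explicit piecewise-exponential triple $(\underline{\phi_0},\underline{\varphi_0},\underline{\psi_0})$ is a genuine sub solution of (\ref{wave 1}); the differential inequalities hold a.e.\ by the case analysis already carried out there, and the regularity (continuous on $\R$, $C^1$ and $C^2$ off the finitely many gluing points $t_i$) is exactly what the sub solution definition demands. Next I would apply the super/sub $\Rightarrow$ quasi upper/lower proposition to this triple. The critical content of that proposition is the smoothing effect of convolution against the Green's function $G$: via the $W^{1,p}\leftrightarrow L^p$ isomorphism referenced after Proposition~\ref{pro 1}, each component $F_i(\underline{\phi_0},\underline{\varphi_0},\underline{\psi_0})$ becomes absolutely continuous with derivative in $L^p$, hence $C^1$ with essentially bounded and locally integrable second derivative — precisely the regularity demanded of a quasi-lower solution.

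To finish the argument, I would record the differential inequalities defining a quasi-lower solution using the sign and monotonicity properties of $F$ from Lemma~\ref{lemF}. Specifically, the chains $\underline{\phi_0}\le F_1(\underline{\phi_0},\underline{\varphi_0},\underline{\psi_0})$ and $\underline{\psi_0}\le F_3(\underline{\phi_0},\underline{\varphi_0},\underline{\psi_0})$ follow from the component-monotone items of the lemma, while for the middle equation the reversed cross-monotonicity $\underline{\varphi_0}\le F_2(\overline{\phi_0},\underline{\varphi_0},\overline{\psi_0})$ is exactly what the cross iteration demands; together with the bounds $0\le \underline{\Phi}\le (M_1,M_2,M_3)$ these yield the quasi-lower inequalities almost everywhere.

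The main obstacle has already been absorbed into the earlier pieces of the paper: Claim~\ref{sub} handles the delicate case-by-case verification that the piecewise-exponential ansatz really is a sub solution, and the abstract super/sub $\Rightarrow$ quasi proposition handles the regularity upgrade produced by $F$. At the level of the corollary itself there is therefore no genuinely new difficulty beyond identifying these two ingredients and invoking them in the correct order; the proof should be essentially one short paragraph citing Claim~\ref{sub} followed by the proposition.
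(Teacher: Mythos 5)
Your proposal is correct and matches the paper's (implicit) argument: the corollary is stated there without proof precisely because it is the direct composition of Claim \ref{sub} with the earlier proposition that $F$ maps super/sub solutions to quasi-upper/lower solutions, which is exactly the route you take. The additional detail you supply on the $W^{1,p}$ regularity upgrade and the monotonicity chains from Lemma \ref{lemF} is consistent with the paper's framework and does not diverge from it.
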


\begin{corollary}
 Let $c\ge c^*$ and $\mathcal{R}_0>1$, and $\underline{\phi_1},   \underline{\varphi_1}, \underline{\psi_1},$ to be the quasi-lower solution as in Claim \ref{quasilow} , then $F\left(\underline{\phi_1}, \underline{\varphi_1}, \underline{\psi_1}\right)$ is a lower solution of system  (\ref{wave 1}).    
\end{corollary}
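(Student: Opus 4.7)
The plan is to recognize this corollary as a direct instance of Proposition 3.4 applied to the specific quasi-lower solution $(\underline{\phi_1},\underline{\varphi_1},\underline{\psi_1})$ constructed in Corollary \ref{quasilow}. For clarity I would verify the two ingredients of this promotion in the present setting: the regularity upgrade from $C^1$ with $C^2$ almost everywhere to genuine $C^2$, and the preservation of the differential inequalities pointwise rather than only a.e.

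For the regularity, set $(\underline{\phi_2},\underline{\varphi_2},\underline{\psi_2}):=F(\underline{\phi_1},\underline{\varphi_1},\underline{\psi_1})$. The convolution representation (\ref{Solution}), together with the Green-kernel bound $|G(\xi)|\le Ke^{-\alpha|\xi|}$ and the isomorphism ${\cal L}^{-1}:L^p\to W^{1,p}$ already places the output in $C^1$. Since $\underline{\phi_1},\underline{\varphi_1},\underline{\psi_1}$ are bounded and $C^1$ while each $f_{ic}$ is Lipschitz by (C2), the forcings $H_i(\underline{\phi_1},\underline{\varphi_1},\underline{\psi_1})$ are continuous and bounded. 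Rewriting (\ref{SIR4}) as
\[
D_i\,\underline{\phi_2}''(t)=c\,\underline{\phi_2}'(t+r_i)+\beta_i\,\underline{\phi_2}(t+r_i)-H_i(\underline{\phi_1},\underline{\varphi_1},\underline{\psi_1})(t),
\]
and analogously for the other two components, exhibits each second derivative as a continuous function of $t$. Hence $(\underline{\phi_2},\underline{\varphi_2},\underline{\psi_2})\in C^2(\R,\R^3)$ with bounded derivatives through order two.

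For the inequalities, I would first establish the ordering $\underline{\phi_1}\le\underline{\phi_2}$, $\underline{\varphi_1}\le\underline{\varphi_2}$, $\underline{\psi_1}\le\underline{\psi_2}$ by integrating the quasi-lower-solution inequalities against the positive Green kernels $G_i$, mirroring the argument used in part (i) of the lemma giving $F(\Gamma)\subset\Gamma$; this step is unaffected by the a.e.\ failure of $\underline{\Phi_1}''$ since the integrals ignore the null set. With that ordering in hand, substituting the operator identity (\ref{SIR4}) into the defect
\[
D_1\underline{\phi_2}''(t)-c\underline{\phi_2}'(t+r_1)+f_{1c}(\underline{\phi_2}_t,\underline{\varphi_2}_t,\underline{\psi_2}_t)
\]
collapses it to $H_1(\underline{\phi_2},\underline{\varphi_2},\underline{\psi_2})-H_1(\underline{\phi_1},\underline{\varphi_1},\underline{\psi_1})$, which is $\ge 0$ by Lemma \ref{lemH}(2); the third equation is handled identically using Lemma \ref{lemH}(3).

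The main obstacle is the second equation, whose lower-solution inequality is cross-coupled: it involves $f_{2c}(\overline{\phi}_t,\underline{\varphi_2}_t,\overline{\psi}_t)$ rather than $f_{2c}$ evaluated at $(\underline{\phi_2},\underline{\varphi_2},\underline{\psi_2})$. The reduction must therefore pass through differences of the form $H_2(\overline{\phi},\underline{\varphi_2},\overline{\psi})-H_2(\overline{\phi},\underline{\varphi_1},\overline{\psi})$, whose signs are controlled by parts (4)--(6) of Lemma \ref{lemH} together with conditions (P3), (P4) of (PQM), combined with the orderings $\underline{\varphi_1}\le\underline{\varphi_2}\le\overline{\varphi}$, $\underline{\phi_2}\le\overline{\phi}$, $\underline{\psi_2}\le\overline{\psi}$. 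Verifying that these partial monotonicities align so that the net sign is $\ge 0$ is where the technical care lies; the cross-iteration scheme was designed precisely so that these signs cooperate, but writing out the chain of partial comparisons explicitly is the delicate step.
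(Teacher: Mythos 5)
Your proposal follows exactly the route the paper intends: the corollary is a direct instance of the proposition asserting that $F$ sends quasi-lower solutions to lower solutions, and since the paper states both this corollary and that proposition without any written proof, your sketch already supplies more detail than the source does. The two ingredients you isolate --- the $C^2$ bootstrap obtained by reading $\underline{\phi_2}''$ off the identity (\ref{SIR4}) with the continuous forcing $H_i(\underline{\phi_1},\underline{\varphi_1},\underline{\psi_1})$, and the reduction of the defect to $H_i(\underline{\Phi_2})-H_i(\underline{\Phi_1})$, whose sign is controlled by Lemma \ref{lemH} once the ordering $\underline{\Phi_1}\le F(\underline{\Phi_1})$ is established by convolving the a.e.\ inequalities against the kernel --- are the correct mechanism. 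The one place you stop short, the cross-coupled second component whose lower-solution inequality is evaluated at $(\overline{\phi}_t,\underline{\varphi}_t,\overline{\psi}_t)$ rather than at $(\underline{\phi_2},\underline{\varphi_2},\underline{\psi_2})$, is precisely the step the paper also never writes down; your plan to control it through parts (4)--(6) of Lemma \ref{lemH} together with (P3)--(P4) is the right one, but to complete it you must first pin down which arguments the cross iteration actually feeds to $F_2$ (the scheme in (\ref{Itop}) leaves this ambiguous), since the chain of partial comparisons only closes if $F_2$ is applied to $(\overline{\phi},\underline{\varphi_1},\overline{\psi})$ in the lower iterate.
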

\begin{corollary}\label{last}
Assume that $c>c^* $ is given. Then, the system (\ref{SIR4}) has a traveling wave solution $(S(x,t),I(x,t), R(x,t)^T=({\phi(x+ct)}, {\varphi(x+ct),{\psi(x+ct)} )^T}$ for sufficiently small delays $\tau_1,\tau_2, \tau_3, \tau_4$.
\end{corollary}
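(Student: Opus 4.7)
The plan is to assemble the ingredients already constructed in Section 4 and invoke the abstract existence theory of Section 3. The first step is to take $\tau_1,\tau_2,\tau_3,\tau_4$ small enough that the conclusions of Proposition \ref{pro 1} and the root-continuity lemma for the exponential polynomials $\Delta_i(\eta)$ apply. This gives, for each $i=1,\dots,6$, a positive root $\eta_i(r)$ with $\lim_{r\to 0}\eta_i(r)=\lambda_i$, and it ensures the characteristic equations (\ref{ce}) have no roots on the imaginary axis. Consequently the Green-function representation (\ref{Solution}) of the operator $F$ is well defined, satisfies the decay estimate $|G(\xi)|\le Ke^{-\alpha|\xi|}$, and the whole framework $F:B_\mu(\R,\R^3)\to B_\mu(\R,\R^3)$ from Section 3 is available.

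Next, I would invoke the explicit construction of Section 4. By Claim \ref{super} the triple $(\overline{\phi},\overline{\varphi},\overline{\psi})$ built piecewise from $k_i e^{\eta_i t}$ on the left and $k_i+\varepsilon_i e^{-\eta t}$ on the right is a super solution of (\ref{wave 1}), and by Claim \ref{sub} the analogous $(\underline{\phi},\underline{\varphi},\underline{\psi})$ is a sub solution; the accompanying lemma guarantees the ordering $\underline{\phi}\le\overline{\phi}$, $\underline{\varphi}\le\overline{\varphi}$, $\underline{\psi}\le\overline{\psi}$ so that the order interval $\Gamma$ is nonempty. Because the nonlinearities of system (\ref{SIR3}) satisfy (PQM) (verified in the lemma preceding the claims) and (C1)--(C2) hold on the rectangle $[0,M_1]\times[0,M_2]\times[0,M_3]$, Lemmas \ref{lemH} and \ref{lemF} apply to $F$ on $\Gamma$.

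Now I would run the two-step smoothing chain. Applying the proposition ``super/sub $\Rightarrow$ quasi-upper/lower under $F$'' (Corollaries \ref{quasiup} and its sub counterpart) gives a quasi-upper/lower pair $(\overline{\phi}_1,\overline{\varphi}_1,\overline{\psi}_1)$, $(\underline{\phi}_1,\underline{\varphi}_1,\underline{\psi}_1)$ in $\Gamma$; a second application of $F$, via the proposition ``quasi-upper/lower $\Rightarrow$ upper/lower'', produces a genuine $C^2$ upper/lower pair $(\overline{\phi}_2,\overline{\varphi}_2,\overline{\psi}_2)$, $(\underline{\phi}_2,\underline{\varphi}_2,\underline{\psi}_2)$ in $\Gamma$, preserving the order by the monotonicity properties of $F$ in Lemma \ref{lemF}. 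These upper and lower solutions satisfy the hypotheses of the main existence theorem of Section 3.

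Finally I would apply Schauder's fixed point theorem exactly as in the proof of that theorem: the set $\Gamma$ (constructed now with $(\overline{\phi}_2,\overline{\varphi}_2,\overline{\psi}_2)$ and $(\underline{\phi}_2,\underline{\varphi}_2,\underline{\psi}_2)$) is a nonempty closed bounded convex subset of the Banach space $(B_\mu,|\cdot|_\mu)$, $F$ maps $\Gamma$ into itself, is continuous with respect to $|\cdot|_\mu$, and $F(\Gamma)$ is relatively compact. Thus $F$ admits a fixed point $(\phi,\varphi,\psi)\in\Gamma$, which by (\ref{fixed point1}) solves the wave system (\ref{SIR4}), equivalently the original mixed-type wave system (\ref{SIR3}); the asymptotic conditions $\lim_{t\to-\infty}(\phi,\varphi,\psi)=(0,0,0)$ and $\lim_{t\to\infty}(\phi,\varphi,\psi)=(k_1,k_2,k_3)$ follow from being squeezed between the super and sub solutions. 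Reversing the wave transformation yields the traveling wave $(S,I,R)(x,t)=(\phi,\varphi,\psi)(x+ct)$ for (\ref{SIR4}). The only nontrivial obstacle I anticipate is bookkeeping the smallness of $\tau_1,\tau_2,\tau_3,\tau_4$: each step (the no-imaginary-axis-root condition, existence of the roots $\eta_i(r)$, validity of the Green-function bound, and the sign inequalities in Claims \ref{super} and \ref{sub}) imposes an upper bound on the delays, and one must choose $\tau_i$ smaller than the minimum of all these bounds.
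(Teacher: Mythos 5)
Your proposal is correct and follows essentially the same route the paper intends: combine the super and sub solutions of Claims \ref{super} and \ref{sub} with the smoothing corollaries (super/sub $\to$ quasi $\to$ genuine upper/lower under $F$), then invoke the Schauder-based existence theorem of Section 3, with the smallness of the delays entering through the root-continuity and Green-function estimates. The paper leaves this assembly implicit, and your sketch fills it in faithfully.
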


\subsection{Statements and Declarations}
There are no competing declarations or sources of funding for this work.
\newpage 
\bibliographystyle{amsplain}

\begin{thebibliography}{10}

\bibitem{Ai}
S. Ai, R. Albashaireh. Traveling waves in spatial SIRS models. \textit{J. Dyn. Differ. Equ.} \textbf{26}, (2014) 143-164.

\bibitem{Al-Omari}
J. Al-Omari, S. Gourley. Monotone travelling fronts in an age-structured reaction-diffusion model of a single species.\textit{ J Math Biol} \textbf{45}, 294–312 (2002)

\bibitem{Bai}
 Bai, Z., Wu, S. Traveling waves in a delayed epidemic model with nonlinear incidence. Appl. Math.
Comput. 263, 221–232 (2015)

\bibitem{Barker}
W.K. Barker.  Existence of Traveling Waves of Lotka Volterra Type Models with Delayed Diffusion Term and Partial Quasimonotonicity  \textit{arXiv preprint} \textbf{arXiv:2303.11145 } (2023). Submitted for final publication

\bibitem{Bark3}
W.K. Barker. Existence of Traveling in a Nicholson Blowfies Model with Delayed Diffusion Term \textit{arXiv preprint} \textbf{arXiv:2304.04847} (2023). Submitted for final publication

\bibitem{BarkNguy}
W.K. Barker, N. Van Minh. Traveling waves in reaction-diffusion equations with delay in both diffusion and reaction terms \textit{arXiv preprint} \textbf{arXiv:2301.11504} (2023). Submitted for final publication

\bibitem{boumin}
A. Boumenir, N. Van Minh. Perron Theorem in the Monotone Iteration Method for Traveling Waves in Delayed Reaction-Diffusion Equations, \textit{J. Differential Equations} \textbf{7} (2008) 1551-1570.

\bibitem{Cooke}
K. Cooke. Stability analysis for a vector disease model, {\it Rocky Mountain Journal of Mathematics} {\bf 9} (1979) 31–42.

\bibitem{Cosner}
R. Cantrell, C. Cosner. Spatial Ecology via Reaction-Diffusion Equations. John Wiley \& Sons. London
2003.

\bibitem{die}
J. Dieudonne. {\it Foundations of Modern Analysis}. Academic Press. New York/London. 1960.

\bibitem{Levin}
R. Durrett, and S.A. Levin. The importance of being discrete (and spatial). \textit{Theor. Pop. Biol.} \textbf{46} (1994),
363–394.



\bibitem{fif}
P. C. Fife. \textit{Mathematical Aspects of Reaction and Diffusion Systems Lecture Notes in
Biomathematics}, Vol. 28, Springer-Verlag, Berlin and New York, 1979.

\bibitem{Fisher}
R.A. Fisher. The Wave of Advance of Advantageous Genes, \textit{Ann. Eugenics} \textbf{7} (1937) 355–369.

\bibitem{Fu}
S. Fu.  Traveling waves for a diffusive SIR model with delay, \textit{J. Math. Anal. Appl.} \textbf{435}, (2016) 20–37. 

\bibitem{GanXuYang}
Q. T. Gan, R. Xu, and P. H. Yang. Travelling waves of a
delayed SIRS epidemic model with spatial diffusion, \textit{Nonlinear
Analysis: Real World Applications}, \textbf{12}(1), (2011) 52-68. 

\bibitem{hale}
J. Hale. \textit{Theory of Functional Differential Equations}, Springer-Verlag, New York 1977.

\bibitem{Huazou2}
J. Huang, X. Zou, Travelling wavefronts in diffusive and cooperative Lotka-Volterra system with delays, \textit{J.
Math. Anal. Appl.}, \textbf{271} (2002) 455–466.

\bibitem{Kawasaki}
K. Kawasaki, N.Shigesada. \textit{Biological Invasions: Theory and Practice}, Oxford University Press, Oxford, 1997.

\bibitem{Kermack}
M. Kermack and A. Mckendrick. Contributions to the mathematical theory of epidemics, \textit{Proceedings of the Royal Society of
London A}, \textbf{115}(4)  (1927) 700–721.

\bibitem{Kolmogorov}
A.N. Kolmogorov, I.G. Petrovskii, N.S. Piskunov. Study of a diffusion equation that is related to the growth of a
quality of matter, and its application to a biological problem, \textit{Byul. Mosk. Gos. Univ. Ser. A Mat. Mekh.} \textbf{1} (1937) 1-26.

\bibitem{LiLi}
Y. Li, W. Li, G. Lin.  Traveling waves of a delayed diffusive SIR epidemic model, \textit{Commun. Pur.
Appl. Anal.} \textbf{14}, (2015)  1001-1022. 

\bibitem{lilin}
W.T. Li, G. Lin, S. Ruan. Existence of travelling wave solutions in delayed reaction–diffusion systems with applications to diffusion-competition
systems,\textit{Nonlinearity} \textbf{19} (2006) 1253–1273

\bibitem{lvwang}
G. Lv, M. Wang. Traveling wave front in diffusive and competitive Lotka–Volterra system with delays, \textit{Nonlinear Anal.: RWA} \textbf{11} 1323–1329(2009).

\bibitem{ma}
S. Ma. Traveling Waves for Delayed Reaction-Diffusion Systems Via a Fixed Point Theorem, \textit{J. Differential Equations}, 171 (2001) 294-314.


\bibitem{mal}
J. Mallet-Paret. The Fredholm Alternative for Function Differential Equations of Mixed Type. {\it J. Dynamics and Differential Equations}, {\bf 11} (1999). p. 1-47.

\bibitem{Martcheva}
M. Martcheva. \textit{An Introduction
to Mathematical
Epidemiology}, Springer, New York 2015.

\bibitem{Murray1}
J.D Murray. \textit{Mathematical Biology I. An Introduction}, Springer-Verlag, New York  2002.

\bibitem{Murray2}
J.D Murray. \textit{Mathematical Biology II. Spatial Models and Biomedical Applications}, Springer-Verlag, New York  2002.

\bibitem{Okubo}
A. Okubo. \textit{Diffusion and ecological problems: mathematical models.
An extended version of the Japanese edition, Ecology and diffusion.} Translated by G. N. Parker. Biomathematics, 10. Springer-Verlag, Berlin-New York, 1980.

\bibitem{pan}
S.X. Pan. Traveling wave solutions in delayed diffusion systems via a cross iteration scheme, \textit{Nonlinear Anal.: RWA} \textbf{10} (2009) 2807–2818.



\bibitem{sch}
K. Schaaf. Asymptotic Behavior and Traveling Wave Solutions for Parabolic Functional Differential Equations,\textit{ Transactions of the American Mathematical Society} \textbf{302} (1987) 587-615.

\bibitem{Shu}
H. Shu, X. Pan, X. Wang. Traveling waves in epidemic models: non-monotone diffusive systems
with non-monotone incidence rates, \textit{J. Dyn. Differ. Equ. }, \textbf{31} (2019) 883-901.

\bibitem{Feng}
S. Ruan, W. Feng , X. Lu. On existence of wavefront solutions in mixed monotone reaction-diffusion systems, \textit{Discrete and Continuous Dynamical Systems-B} \textbf{21} (3) (2016) 815-836.


\bibitem{Ruan}
S. Ruan, W. Feng , X. Lu. On traveling wave solutions in general reaction–diffusion systems with time delays,
\textit{J. Math. Anal. Appl.} \textbf{448} (2017) 376–400.

\bibitem{Wangzhou}
Q.R. Wang, K. Zhou. Traveling wave solutions in delayed reaction–diffusion systems with mixed monotonicity, \textit{J. Comput. Appl. Math.} \textbf{233} (2010)
2549–2562.

\bibitem{WangWu}
X.S. Wang, H. Wang, and J. Wu. Traveling waves of diffusive
predator-prey systems: disease outbreak propagation, \textit{Discrete
and Continuous Dynamical Systems A}, \textbf{32}(9), (2012) 3303-3324. 

\bibitem{WengZhao}
P. Weng and X.Q. Zhao. Spreading speed and traveling waves
for a multi-type SIS epidemic model, \textit{Journal of Differential
Equations}, \textbf{229}(1), (2006) 270-296.

\bibitem{wuzou}
J. Wu, X. Zou. Traveling Wave Fronts of Reaction-Diffusion equations with Delay,\textit{ Journal of
Dynamics and Differential Equations} \textbf{13} (2001) 651-687.



\bibitem{YangLiang}
J. Yang, S. Liang, and Y. Zhang. Travelling waves of a delayed
SIR epidemic model with nonlinear incidence rate and spatial
diffusion, \textit{PLoS ONE},  \textbf{6}(6), (2011) Article ID e21128.

\end{thebibliography}

\end{document}